\theoremstyle{plain}
\newtheorem{theorem}{Theorem}[section]                                          
\newtheorem{proposition}[theorem]{Proposition}                          
\newtheorem{lemma}[theorem]{Lemma}
\theoremstyle{definition}
\theoremstyle{remark}
\newtheorem{remark}[theorem]{Remark}
\newtheorem{examples}[theorem]{Examples}
\DeclareMathOperator{\p}{P}
\title{Benford or not Benford: a systematic but not always well-founded use of an elegant law in experimental fields}
\author{Blondeau Da Silva Stéphane}
\begin{document}

\maketitle

\begin{abstract}
In this paper, we will see that the proportion of $d$ as leading digit, $d\in\llbracket1,9\rrbracket$, in data (obtained thanks to the hereunder 
developed model) is more likely to follow a law whose probability distribution is determined by a specific upper bound, rather than Benford's Law. These 
probability distributions fluctuate around Benford's value as can often be observed in the literature in many naturally occurring collections of data 
(where the physical, biological or economical quantities considered are upper bounded). Knowing beforehand the value of the upper bound can be a way to 
find a better adjusted law than Benford's one. 
\end{abstract}



\section*{Introduction}

Benford's Law, also called Newcomb-Benford's Law, is noteworthy to say the least: according to it, the first digit $d$, $d\in\llbracket1,9\rrbracket$, of 
numbers in many naturally occurring collections of data does not follow a discrete uniform distribution, as might be thought, but a logarithmic 
distribution. Discovered by the astronomer Newcomb in $1881$ (\cite{new}), this law was definitively brought to light by the physicist Benford in 
$1938$ (\cite{ben}). He proposed the following probability distribution: the probability for $d$ to be the first digit of a number seems to be equal to 
$\log(d+1)-\log(d)$, \textit{i.e.} $\log(1+\frac{1}{d})$. Benford tested it over data set from $20$ different domains (surface areas of rivers, sizes of
american populations, physical constants, molecular weights, entries from a mathematical handbook, numbers contained in an issue of Reader's Digest, the 
street addresses of the first persons listed in American Men of Science, death rates, \textit{etc.}). Most of the empirical data, as physical data 
(Knuth in \cite{knu} or Burke and Kincanon in \cite{BK}), economic and demographic data (Nigrini and Wood in \cite{NW}) or genome data (Friar 
\textit{et al.} in \cite{FGP}), follow approximately Benford's Law. To such an extent that this law is used to detect possible frauds in lists of 
socio-economic data (\cite{var}) or in scientific publications (\cite{die}).

First restricted to the experimental field, it is now established that this law holds for various mathematical sequences (see for example \cite{ber}). 
In the situation, where the distribution of first digits is scale, unit or base invariant, this distribution is always given by Benford's Law 
(\cite{pin} and \cite{hil}). Selecting different samples in different populations, under certain constraints, leads also to construct a sequence that 
follows the Benford's Law (\cite{hip}). Furthermore independant variables multiplication conducts to this law (\cite{boy}). One might add that some 
sequences satisfy Benford's Law exactly (for example see \cite{sar},\cite{was} or \cite{jol}).

We can note that there also exist distributions known to disobey Benford's Law (\cite{rai} and \cite{bee}). And even concerning empirical data sets, 
this law appears to be a good approximation of the reality, but no more than an approximation (\cite{del}).

In the model we build in the article, the naturally occurring data will be considered as the realizations of independant random variables following the 
hereinafter constraints: $(a)$ the data is strictly positive and is upper-bounded by an integer $n$, constraint which is often valid in data sets, the 
physical, biological and economical quantities being limited ; $(b)$ each random variable is considered to follow a discrete uniform distribution 
whereby the first $i$ strictly positive integers are equally likely to occur ($i$ being uniformly randomly selected in $\llbracket1,n\rrbracket$). This 
model relies on the fact that the random variables are not always the same. The article is divided into two parts. In the first one, we will accurately 
study the case where the leading digit is $1$. In the second one, we will generalize our results to the eight last cases.

Through this article we will demonstrate that the predominance of $1$ as first digit (followed by those of $2$ and so on) is all but surprising, and 
that the observed fluctuations around the values of probability determined by Benford's Law are also predictible. The point is that, since $1938$, 
Benford's Law probabilities became standard values that should exactly be followed by most of naturally occurring collections of data. However the 
reality is that the proportion of each $d$ as leading digit, $d\in\llbracket1,9\rrbracket$, structurally fluctuates. There is not a single Benford's Law 
but numerous distinct laws that we will hereafter examine.

\section{The chosen probability space}

\subsection{Notations}

In order to determine the proportion of numbers whose leading digit is $d\in\llbracket1,9\rrbracket$, we will first build our probability space and 
further explain the model we choose.

Let $i$ be a strictly positive integer. Let $\mathcal{U}_{\{i\}}$ denote the discrete uniform distribution whereby the first $i$ strictly positive 
integers are equally likely to be observed.

Let $n$ be a strictly positive integer. Let us consider the random experiment $\mathcal{E}_n$ of tossing two independent dice. The first 
one is a fair $n$-sided die showing $n$ different numbers from $1$ to $n$. The number $i$ rolled on it defines the number of faces on the second die. 
It thus shows $i$ different numbers from $1$ to $i$. 

Let us now define the probability space $\Omega_n$ as follows: $\Omega_n=\{(i,j):i\in\llbracket1,n\rrbracket\text{ and }j\in\llbracket1,i\rrbracket\}$. 
Our probability measure is denoted by $\p$. 

Let us denote by $L_{n}$ the random variable from $\Omega_n$ to $\llbracket1,9\rrbracket$ that maps each element $\omega$ of $\Omega_n$ to the leading 
digit of the second component of $\omega$.

\subsection{Why such a model?}

Let us imagine a perfect consumer shopping in a perfectly structured supermarket: (a) in the $i^{\text{th}}$ ($i$ being a strictly positive integer) 
section of this supermarket, the products prices range between $1$ and $i$ cents of the considered currency; (b) the prices in a section are uniformly 
distributed; (c) each section contains the same quantity of products and (d) the consumer randomly chooses his products in the whole store. 

Under these constraining hypotheses, these perfect entities enable us to use our model. Note that conditions (c) and (d) gathered avoid us to conduct a 
double drawing every time: first the section then the product. In that respect, the sales receipt will verify the following results in terms of 
proportion of $d$ as leading digit, $d\in\llbracket1,9\rrbracket$.

Among the different domains studied by Benford (\cite{ben}), some could be well adapted to our model: sizes of populations (sections here
gathering all the populations having the same usable areas, the geographic constraints preventing the surface area to be broader; populations being not 
neccessary settled on the entire area, their sizes fluctuate) or street adresses for example (sections here gathering the adresses of a selected street; 
the lenght of the considered streets might be uniformly distributed to fit model criteria).

Hence the defined model is relevant when the studied data can be considered as realizations of a homogeneous and expanded range of random variables 
approximately following discrete uniform distributions.

\section{Proportion of \texorpdfstring{$d$}{}}

Through the below proposition, we will express the probability $\p(L_{n}=d)$, for each $n\in\mathbb N^*$, \textit{i.e.} the probability that the 
leading digit of our second throw in our random experiment is $d$.

\begin{proposition}\label{proi}Let $k$ denote the positive integer such that $k=\min\{i\in\mathbb N:d\times10^i>n\}$.
If $n< (d+1)\times10^{k-1}$, the value of $\p(L_{n}=d)$ is:
\begingroup\scriptsize
\begin{equation*}
\frac{1}{n}\big(\sum_{l=0}^{k-2}(\sum_{b=d\times10^l}^{(d+1)\times10^l-1}\frac{b-\frac{(9d-1)\times10^l-8}{9}}{b}+\sum_{a=(d+1)\times10^l}^{d
\times10^{l+1}-1}\frac{\frac{10^{l+1}-1}{9}}{a})+\sum_{b=d\times10^{k-1}}^{n}\frac{b-\frac{(9d-1)\times10^{k-1}-8}{9}}{b}\big).
\end{equation*}
\endgroup
Otherwise the value of $\p(L_{n}=d)$ is:
\begingroup\scriptsize
\begin{align*}
&\frac{1}{n}\Big(\sum_{l=0}^{k-2}(\sum_{b=d\times10^l}^{(d+1)\times10^l-1}\frac{b-\frac{(9d-1)\times10^l-8}{9}}{b}+\sum_{a=(d+1)\times10^l}^{d\times
10^{l+1}-1}\frac{\frac{10^{l+1}-1}{9}}{a})\\
&\enspace+\sum_{b=d\times10^{k-1}}^{(d+1)\times10^{k-1}-1}\frac{b-\frac{(9d-1)\times10^{k-1}-1}{9}}{b}+\sum_{a=(d+1)\times10^{k-1}}^{n}
\frac{\frac{10^k-1}{9}}{a}\Big)\,.
\end{align*}
\endgroup
\end{proposition}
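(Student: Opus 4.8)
The plan is to pin down the probability measure explicitly and then reduce the statement to a purely combinatorial count of leading-digit-$d$ integers. First I would observe that the two-stage experiment $\mathcal{E}_n$ assigns to each outcome $(i,j)\in\Omega_n$ the weight $\p(\{(i,j)\})=\frac{1}{n}\cdot\frac{1}{i}$: the fair $n$-sided die gives $i$ with probability $\frac1n$, and conditionally on $i$ the $i$-sided die gives each $j\in\llbracket1,i\rrbracket$ with probability $\frac1i$. A quick check that $\sum_{i=1}^n\sum_{j=1}^i\frac{1}{ni}=1$ confirms this is a probability measure. Writing $N_d(i)$ for the number of integers in $\llbracket1,i\rrbracket$ whose leading digit is $d$, summing the weights over the fibre $\{L_n=d\}$ gives
\[
\p(L_n=d)=\frac1n\sum_{i=1}^n\frac{N_d(i)}{i},
\]
so everything reduces to understanding the function $i\mapsto N_d(i)$.

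Next I would describe $N_d$ explicitly. The integers with leading digit $d$ are exactly the disjoint blocks $B_l=\llbracket d\times10^l,(d+1)\times10^l-1\rrbracket$ for $l\ge0$, each of cardinality $10^l$, separated by gaps $G_l=\llbracket(d+1)\times10^l,d\times10^{l+1}-1\rrbracket$ that contain no such integer. Using the geometric sum $\sum_{m=0}^{l-1}10^m=\frac{10^l-1}{9}$ to count the blocks $B_0,\dots,B_{l-1}$ already passed, a direct computation gives, for $i\in B_l$,
\[
N_d(i)=\frac{10^l-1}{9}+\big(i-d\times10^l+1\big)=i-\frac{(9d-1)\times10^l-8}{9},
\]
while $N_d$ is constant on each gap, equal to $\frac{10^{l+1}-1}{9}$ on $G_l$. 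These are precisely the two summand shapes $\frac{b-\frac{(9d-1)\times10^l-8}{9}}{b}$ and $\frac{(10^{l+1}-1)/9}{a}$ appearing in the statement.

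It then remains to cut the range $\llbracket1,n\rrbracket$ at the right place. By definition $k$ satisfies $d\times10^{k-1}\le n<d\times10^k$, so $n$ lies either in the block $B_{k-1}$ or in the gap $G_{k-1}$, and in all cases the complete blocks and gaps $B_0,G_0,\dots,B_{k-2},G_{k-2}$ are fully traversed; their contributions assemble into the common $\sum_{l=0}^{k-2}(\cdots)$ term. If $n<(d+1)\times10^{k-1}$ (Case 1) then $n\in B_{k-1}$ and only a partial block $\llbracket d\times10^{k-1},n\rrbracket$ remains, yielding the first formula. Otherwise $n\in G_{k-1}$, so the whole block $B_{k-1}$ is summed and then a partial gap $\llbracket(d+1)\times10^{k-1},n\rrbracket$ with constant value $\frac{10^k-1}{9}$, yielding the second formula. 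Substituting the piecewise expression for $N_d(i)/i$ into $\frac1n\sum_{i=1}^n N_d(i)/i$ and grouping by blocks and gaps produces the displayed identities.

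The substantive content is entirely in the second paragraph: getting the offset $\frac{10^l-1}{9}$ for the already-counted blocks right and simplifying the affine count on $B_l$ to the form $i-\frac{(9d-1)\times10^l-8}{9}$. The rest is careful bookkeeping of summation ranges, where the only real pitfall is aligning the block and gap endpoints with the two cases and confirming that the offset constant carried by a complete block $B_{k-1}$ is the same $\frac{(9d-1)\times10^{k-1}-8}{9}$ used on a partial block. One should also dispose of the degenerate regime $n<d$ (where $k=0$ and no leading-digit-$d$ integer occurs, so $\p(L_n=d)=0$) and otherwise note $k\ge1$, so that the index ranges above are meaningful.
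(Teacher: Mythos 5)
Your proof is correct and follows essentially the same route as the paper's: your identity $\p(L_n=d)=\frac{1}{n}\sum_{i=1}^n N_d(i)/i$ is exactly the paper's law-of-total-probability step, and your piecewise formula for $N_d$ on the blocks $B_l$ and gaps $G_l$ reproduces the paper's two conditional-probability cases $\p(L_n=d\,|\,D_n=i)$, with the same geometric-sum offset $\frac{10^l-1}{9}$. Note that your computation (like the paper's own proof, equation (3) there) yields the constant $\frac{(9d-1)\times10^{k-1}-8}{9}$ in the complete-block term of the second case, so the ``$-1$'' appearing at that spot in the stated proposition is a misprint; your explicit bookkeeping of the assembly step and of the degenerate regime $n<d$ is, if anything, more careful than the paper's.
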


\begin{proof}Let us denote by $D_{n}$ the random variable from $\Omega_n$ to $\llbracket1,n\rrbracket$ that maps each element $\omega$ of $\Omega_n$ to 
the first component of $\omega$. It returns the number obtained on the first throw of the unbiased $n$-sided die. For each $i\in\llbracket1,n
\rrbracket$, we have:
\begin{align}\label{equi}
\p(D_n=i)=\frac{1}{n}\,.
\end{align}

According to the Law of total probability, we state:
\begin{align}\label{espi}
\p(L_{n}=d)=\sum_{i=1}^n\p(L_n=d|D_n=i)\p(D_n=i)\,.
\end{align}

Thereupon two cases appear in determining the value of $\p(L_n=d|D_n=i)$, for $i\in\llbracket1,n\rrbracket$. Let us study the first case where the 
leading digit of $i$ is $d$. Let $k_i$ be the positive integer such that $k_i=\min\{k\in\mathbb N:d\times10^k> i\}$ in both cases. Among the first 
$d\times10^{k_i-1}-1$ non-zero integers (all lower than $i$), the number of integers whose leading digit is $d$ is (if $k_i\ge2$):
\begin{equation*}
\sum_{t=0}^{k_i-2}10^t=1\times\frac{1-10^{k_i-1}}{1-10}=\frac{10^{k_i-1}-1}{9}\,.
\end{equation*}
This equality still holds true for $k_i=1$. From $d\times10^{k_i-1}$ to $i$, there exist $i-d\times10^{k_i-1}+1$ additional integers whose leading digit 
is $d$. It may be inferred that:
\begin{equation}\label{p1i}
\p(L_n=d|D_n=i)=\frac{1}{i}(\frac{10^{k_i-1}-1}{9}+i-d\times10^{k_i-1}+1)=\frac{i-\frac{(9d-1)10^{k_i-1}-8}{9}}{i},
\end{equation}
the leading digit of $i$ being here $d$.

In the second case, we consider the integers $i$ whose leading digits are different from $d$. Among the first $(d+1)\times10^{k_i-1}-1$ non-zero 
integers ($i$ is greater than or equal to $(d+1)\times10^{k_i-1}$), the number of integers whose leading digit is $d$ is:
\begin{equation*}
\sum_{t=0}^{k_i-1}10^t=\frac{10^{k_i}-1}{9}\,.
\end{equation*}
From $2\times10^{k_i-1}$ to $i$, there exists no additional integers whose leading digit is $d$. It can be concluded that:
\begin{equation}\label{p2i}
\p(L_n=d|D_n=i)=\frac{\frac{10^{k_i}-1}{9}}{i},
\end{equation}
the leading digit of $i$ being here different from $d$.\\
Using equalities (\ref{equi}), (\ref{espi}), (\ref{p1i}) and (\ref{p2i}), we get our result.
\end{proof}

For example, we get:
\begin{examples}
If $n=20$, we have $k=2$. The value of $\p(L_{20}=1)$ is then (second case of Proposition \ref{proi}):
\begin{align*}
\p(L_{20}=1)&=\frac{1}{20}\Big(\frac{1-8\frac{10^0-1}{9}}{1}+\sum_{a=2}^{9}\frac{\frac{10^{0+1}-1}{9}}{a}+\sum_{b=10}^{19}\frac{b-8\frac{10^{1}-1}{9}}{b}+
\frac{\frac{10^2-1}{9}}{20}\Big)\\
&=\frac{1}{20}\Big(1+\frac{1}{2}+...+\frac{1}{9}+\frac{2}{10}+...+\frac{11}{19}+\frac{11}{20}\Big)\\
&\approx0.381\,.
\end{align*}
The value of $\p(L_{86}=8)$ is (first case of Proposition \ref{proi}):
\begingroup\scriptsize
\begin{align*}
\p(L_{86}=8)&=\frac{1}{806}\Big(\sum_{l=0}^{1}(\sum_{b=8\times10^l}^{9\times10^l-1}\frac{b-\frac{(9\times8-1)10^l-8}{9}}{b}+\sum_{a=9\times10^l}^{8\times
10^{l+1}-1}\frac{\frac{10^{l+1}-1}{9}}{a})+\sum_{b=800}^{806}\frac{b-\frac{71\times10^2-8}{9}}{b}\Big)\\
&=\frac{1}{86}\Big(\frac{1}{8}+\frac{1}{9}+...+\frac{1}{79}+\frac{2}{80}+...+\frac{11}{89}+\frac{11}{90}+...+\frac{11}{799}+\frac{12}{800}+...+
\frac{18}{806}\Big)\\
&\approx0,034\,.
\end{align*}
\endgroup
\end{examples}

\section{Study of two subsequences}

It is natural that we take a specific look at the values of $n$ positioned just before a long sequence of numbers whose leading digit is $d$ ; or 
conversely, at those positioned just before a long sequence of numbers whose leading digit is all but $d$.

To this end we will consider the sequence $(\p(L_{n}=d))_{n\in\mathbb N^*}$. In the interests of simplifying notation, we will denote by $(P_{(d,n)})_
{n\in\mathbb N^*}$ this sequence. Let us study two of its subsequences. 

\subsection{The first subsequence}

The first one is the subsequence $(P_{(d,\phi_d(n))})_{n\in\mathbb N^*}$ where 
$\phi_d$ is the function from $\mathbb N^*$ to $\mathbb N$ that maps $n$ to $d\times10^n-1$. We get the below result:
\begin{proposition}\label{sub1i}
The subsequence $(P_{(d,\phi_d(n))})_{n\in\mathbb N^*}$ converges to:
\begin{align*}
\frac{9-(9d-1)\ln(\frac{d+1}{d})+10\ln(\frac{10d}{d+1})}{81d}=\frac{9+10\ln10+9(d+1)\ln(1-\frac{1}{d+1})}{81d}\,.
\end{align*}
\end{proposition}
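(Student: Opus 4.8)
The plan is to substitute $n=\phi_d(N)=d\times10^N-1$ into Proposition \ref{proi} and let $N\to\infty$. First I would pin down the case. With this $n$ one has $k=\min\{i\in\mathbb N:d\times10^i>n\}=N$, since $d\times10^N>n$ while $d\times10^{N-1}\le n$; moreover $n-(d+1)\times10^{N-1}=(9d-1)\times10^{N-1}-1\ge7>0$, so the condition $n<(d+1)\times10^{k-1}$ fails and we are in the second (``Otherwise'') case, with $k=N$ and the last sum running up to $n=d\times10^N-1$.

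The heart of the matter is to replace each sum by its asymptotic value using $\sum_{b=A}^{B}\frac1b=\ln\frac BA+O(\frac1A)$. At level $l$, writing $c_l=\frac{(9d-1)\times10^l-8}{9}$, the first inner sum is $\sum_{b=d\times10^l}^{(d+1)\times10^l-1}\big(1-\frac{c_l}{b}\big)$, which has $10^l$ terms and equals $10^l-\frac{(9d-1)\times10^l}{9}\ln\frac{d+1}{d}+O(1)$; the second inner sum equals $\frac{10^{l+1}}{9}\ln\frac{10d}{d+1}+O(1)$. Hence the level-$l$ block equals $B\,10^l+O(1)$, where
\begin{equation*}
B=1-\tfrac{9d-1}{9}\ln\tfrac{d+1}{d}+\tfrac{10}{9}\ln\tfrac{10d}{d+1}.
\end{equation*}
The same estimates applied to the two trailing sums (indices $N-1$) show they contribute $B\,10^{N-1}+O(1)$ together; the replacement of $-8$ by $-1$ in the numerator of the middle sum only alters the $O(1)$ term.

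Summing the geometric contributions, $\sum_{l=0}^{N-2}B\,10^l+B\,10^{N-1}=B\cdot\frac{10^{N-1}-1}{9}+B\,10^{N-1}=B\cdot\frac{10^N}{9}+O(1)$, so the whole bracket in Proposition \ref{proi} equals $B\cdot\frac{10^N}{9}+O(N)$. Dividing by $n=d\times10^N-1$ and letting $N\to\infty$ gives $\frac{B}{9d}=\frac{9B}{81d}$, and inserting the value of $B$ yields the first stated expression. The second form then follows by the one-line rewriting $\ln\frac{10d}{d+1}=\ln10-\ln\frac{d+1}{d}$ together with $\ln\frac{d+1}{d}=-\ln\!\big(1-\frac1{d+1}\big)$.

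The step that needs genuine care is the control of the error terms: the $O(\frac1A)$ in the harmonic estimate is multiplied by a coefficient of size $\sim10^l$, producing an $O(1)$ error per level, and the additive constants $-\frac89,-\frac19$ also contribute $O(1)$; one must verify that these accumulate over the $N-1$ levels only to $O(N)$, which is negligible after division by $n\sim d\times10^N$. A cleaner but equivalent route, which makes the limit transparent, is to note from (\ref{p1i})--(\ref{p2i}) that $P_{(d,n)}=\frac1n\sum_{i=1}^n\frac{N_d(i)}{i}$, where $N_d(i)$ is the number of integers in $\llbracket1,i\rrbracket$ with leading digit $d$; under the scaling $i=x\,10^m$ the function $N_d(i)/i$ tends on each decade to a fixed profile $G(x)$, and the claimed limit is $\frac1{9d}\big(\ln d+\int_1^{10}G(x)\,dx\big)$, the term $\ln d$ accounting for the partial last decade. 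Either route reduces the proposition to the same Riemann-sum evaluation.
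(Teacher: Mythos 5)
Your proof is correct and takes essentially the same route as the paper's: both specialize Proposition \ref{proi} at $n=d\times10^{N}-1$ (second case, so the expression is exactly the sum over $l=0,\dots,N-1$ of the level blocks) and approximate each harmonic sum by a logarithm via estimates equivalent to (\ref{inelog}), the accumulated errors vanishing after division by $d\times10^{N}-1$. The difference is purely one of bookkeeping—you carry uniform $O(1)$ errors per level summing to $O(N)$, whereas the paper writes explicit two-sided bounds and squeezes—and your closing Riemann-sum remark is an optional sketch the main argument does not need.
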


\begin{proof}
Let $n$ be a positive integer such that $n\ge2$. According to Proposition \ref{proi}, we have:
\begingroup\small
\begin{align*}
P_{(d,\phi_d(n))}&=P_{(d,d\times10^n-1)}\\
&=\frac{1}{d\times10^n-1}\sum_{l=0}^{n-1}(\sum_{b=d\times10^l}^{(d+1)\times10^l-1}\frac{b-\frac{(9d-1)\times
10^l-8}{9}}{b}+\sum_{a=(d+1)\times10^l}^{d\times10^{l+1}-1}\frac{\frac{10^{l+1}-1}{9}}{a})\,.
\end{align*}
\endgroup
Let us first find an appropriate lower bound of $P_{(d,\phi_d(n))}$:
\begingroup\scriptsize
\begin{align*}
P_{(d,\phi_d(n))}&\ge\frac{1}{d\times10^n}\sum_{l=1}^{n-1}(\sum_{b=d\times10^l}^{(d+1)\times10^l-1}1-\frac{(9d-1)10^l}{9}\sum_{b=d\times10^l}^{
(d+1)\times10^l-1}\frac{1}{b}+\frac{10^{l+1}-1}{9}\sum_{a=(d+1)\times10^l}^{d\times10^{l+1}-1}\frac{1}{a})\\
&\ge\frac{1}{d\times10^n}\sum_{l=1}^{n-1}\Big(10^l-\frac{9d-1}{9}10^l\ln(\frac{(d+1)\times10^l-1}{d\times10^l-1})+\frac{10^{l+1}-1}{9}
\ln(\frac{d\times10^{l+1}}{(d+1)\times10^l})\Big),
\end{align*}
\endgroup
knowing that for all integers $(p,q)$, such that $1<p<q$: 
\begin{align}\label{inelog}
\ln(\frac{q+1}{p})\le\sum_{k=p}^q\frac{1}{k}\le\ln(\frac{q}{p-1})\,.
\end{align}
Therefore we have:
\begingroup\scriptsize
\begin{align*}
P_{(d,\phi_d(n))}&\ge\frac{1}{d\times10^n}\Big(\sum_{l=1}^{n-1}10^l-\frac{9d-1}{9}\sum_{l=1}^{n-1}10^l\big(\ln(\frac{d+1}{d})+
\ln(\frac{10^l-\frac{1}{d+1}}{10^l-\frac{1}{d}})\big)\\
&\enspace+\frac{\ln(\frac{10d}{d+1})}{9}\sum_{l=1}^{n-1}(10^{l+1}-1)\Big)\\
&\ge\frac{1}{d\times10^n}\Big(\frac{10(10^{n-1}-1)}{9}-\frac{(9d-1)\ln(\frac{d+1}{d})}{9}\times\frac{10(10^{n-1}-1)}{9}\\
&\enspace-\frac{9d-1}{9}\sum_{l=1}^{n-1}10^l\ln(1+\frac{\frac{1}{d(d+1)}}{10^l-\frac{1}{d}})+\frac{\ln(\frac{10d}{d+1})}{9}\times
(\frac{10^2(10^{n-1}-1)}{9}-(n-1))\Big)\\
&\ge\frac{9-(9d-1)\ln(\frac{d+1}{d})+10\ln(\frac{10d}{d+1})}{81d}\\
&\enspace-\frac{9+10\ln(\frac{10d}{d+1})+\frac{9\ln(\frac{10d}{d+1})}{10}n}{81d\times10^{n-1}}
-\frac{9d-1}{9}\sum_{l=1}^{n-1}\frac{10^l\ln(1+\frac{\frac{1}{d+1}}{d\times10^l-1})}{d\times10^n}
\end{align*}
\endgroup
we know that for all $x\in]-1;+\infty[$ we have: $\ln(1+x)\le x$, thus:
\begingroup\scriptsize
\begin{align*}
-\frac{9d-1}{9}\sum_{l=1}^{n-1}\frac{10^l\ln(1+\frac{\frac{1}{d+1}}{d\times10^l-1})}{d\times10^n}&\ge-\frac{1}{10^n}\sum_{l=1}^{n-1}\frac{\frac{1}{d+1}
\times10^l}{d\times10^l-1}\ge-\frac{1}{10^n}\sum_{l=1}^{n-1}1\ge-\frac{n}{10^n}\,.
\end{align*}
\endgroup
Consequently, we obtain this lower bound:
\begingroup\scriptsize
\begin{align}\label{lboui}
P_{(d,\phi_d(n))}&\ge\frac{9-(9d-1)\ln(\frac{d+1}{d})+10\ln(\frac{10d}{d+1})}{81d}-\frac{90+100\ln(\frac{10d}{d+1})+9\ln(\frac{10d}{d+1})n+81dn}
{81d\times10^n}\,.
\end{align}
\endgroup
Let us now find an appropriate upper bound of $P_{(d,\phi_d(n))}$:
\begingroup\scriptsize
\begin{align*}
P_{(d,\phi_d(n))}&\le\frac{1}{d\times10^n-1}\sum_{l=0}^{n-1}(\sum_{b=d\times10^l}^{(d+1)\times10^l-1}1-\frac{(9d-1)10^l-8}{9}\sum_{b=d\times10^l}^{(d+1)
\times10^l-1}\frac{1}{b}\\
&\enspace+\frac{10^{l+1}}{9}\sum_{a=(d+1)\times10^l}^{d\times10^{l+1}-1}\frac{1}{a})\\
&\le\frac{1}{d\times10^n-1}\Big(\frac{10^n-1}{9}-\frac{1}{9}\sum_{l=1}^{n-1}\big((9d-1)10^l-8\big)\ln(\frac{(d+1)\times10^l}{d\times10^l})\\
&\enspace+\sum_{l=0}^{n-1}\frac{10^{l+1}}{9}\ln(\frac{d\times10^{l+1}-1}{(d+1)\times10^l-1})\Big),
\end{align*}
\endgroup
thanks to inequalities (\ref{inelog}). Thus we get:
\begingroup\scriptsize
\begin{align*}
P_{(d,\phi_d(n))}&\le\frac{1}{d\times10^n-1}\big(\frac{10^n}{9}-\frac{\ln(\frac{d+1}{d})}{9}\sum_{l=1}^{n-1}\big((9d-1)10^l-8\big)\\
&\enspace+\sum_{l=0}^{n-1}\frac{10^{l+1}}{9}(\ln(\frac{10d}{d+1})+\ln(\frac{10^l-\frac{1}{10d}}{10^l-\frac{1}{d+1}}))\big)\\
&\le\frac{1}{d\times10^n-1}\big(\frac{10^n}{9}-\frac{\ln(\frac{d+1}{d})}{9}\big(\frac{(9d-1)10(10^{n-1}-1)}{9}-8n\big)+
\frac{10\ln(\frac{10d}{d+1})}{9}\times\frac{10^n-1}{9}\\
\enspace&+\frac{10}{9}\sum_{l=0}^{n-1}10^l\ln(1+\frac{\frac{1}{d+1}-\frac{1}{10d}}{10^l-\frac{1}{d+1}})\big)\\
&\le\frac{1}{d(10^n-\frac{1}{d})}\big(\frac{10^n-\frac{1}{d}+\frac{1}{d}}{9}-\frac{\ln(\frac{d+1}{d})}{9}\times(\frac{(9d-1)(10^n-\frac{1}{d}
+\frac{1}{d}-10)}{9}-8n)\\
&\enspace+\frac{10\ln(\frac{10d}{d+1})}{9}\times\frac{10^n-\frac{1}{d}+\frac{1}{d}-1}{9}+\frac{10}{9}\sum_{l=0}^{n-1}10^l\ln(1+\frac{\frac{1}{d+1}-
\frac{1}{10d}}{10^l-\frac{1}{d+1}})\big)\\
&\le\frac{9-(9d-1)\ln(\frac{d+1}{d})+10\ln(\frac{10d}{d+1})}{81d}+\frac{\frac{9}{d}+90d\ln(\frac{d+1}{d})+72n\ln(\frac{d+1}{d})+
\frac{10\ln(\frac{10d}{d+1})}{d}}{81(d\times10^n-1)}\\
&\enspace+\frac{10}{9(d\times10^n-1)}\sum_{l=0}^{n-1}\frac{10^l(1-\frac{d+1}{10d})}{(d+1)10^l-1}\\
&\le\frac{9-(9d-1)\ln(\frac{d+1}{d})+10\ln(\frac{10d}{d+1})}{81d}+\frac{\frac{9}{d}+90d\ln(\frac{d+1}{d})+72n\ln(\frac{d+1}{d})+
\frac{10\ln(\frac{10d}{d+1})}{d}}{81(d\times10^n-1)}\\
&\enspace+\frac{10}{9(d\times10^n-1)}\sum_{l=0}^{n-1}1\,.
\end{align*}
\endgroup
The last step is easy to demonstrate even for $l=0$. Consequently, we obtain this upper bound:
\begingroup\small
\begin{align*}
P_{(d,\phi_d(n))}&\le\frac{9-(9d-1)\ln(\frac{d+1}{d})+10\ln(\frac{10d}{d+1})}{81d}\\
&\enspace+\frac{\frac{9}{d}+90d\ln(\frac{d+1}{d})+72n\ln(\frac{d+1}{d})+
\frac{10\ln(\frac{10d}{d+1})}{d}+90n}{81(d\times10^n-1)}.
\end{align*}
\endgroup
The bound just above and the one brought to light in inequality (\ref{lboui}), added to the following limits: 
\begingroup\scriptsize
\begin{align*}
&\lim\limits_{n\to+\infty}(\frac{90+100\ln(\frac{10d}{d+1})+9\ln(\frac{10d}{d+1})n+81dn}
{81d\times10^n})=0\quad\text{and}\\
&\lim\limits_{n\to+\infty}(\frac{\frac{9}{d}+90d\ln(\frac{d+1}{d})+72n\ln(\frac{d+1}{d})+
\frac{10\ln(\frac{10d}{d+1})}{d}+90n}{81(d\times10^n-1)})=0
\end{align*}
\endgroup
lead to the expected result.
\end{proof}

Let us denote by $\alpha_d$ the limit of $(P_{(d,\phi_d(n))})_{n\in\mathbb N^*}$: 
\begin{align*}
\alpha_d=\frac{9+10\ln10+9(d+1)\ln(1-\frac{1}{d+1})}{81d}\,.
\end{align*}

Here is the first values of $P_{(1,\phi(n))}$ ($\alpha_1\approx0.241$):
\begin{table}[ht]
\centering
\begin{tabular}{|c||c|c|c|}
\hline
\rowcolor{gray!40}$n$&$\phi(n)$&$P_{(i,\phi(n))}$&$P_{(i,\phi(n))}-\alpha_1$\\\hline
$1$& $9$ & $0.314$ & $7.30\times10^{-2}$   \\\hline
$2$& $99$ & $0.253$ & $1.12\times10^{-2}$   \\\hline
$3$& $999$ & $0.243$ & $1.55\times10^{-3}$   \\\hline
$4$& $9999$ & $0.242$ & $1.99\times10^{-4}$   \\\hline
$5$& $99999$ & $0.241$ & $2.43\times10^{-5}$   \\\hline
\end{tabular}
\caption{First five values $P_{(1,\phi(n))}$ and $P_{(1,\phi(n))}-\alpha_1$. We round off these values to three significant digits.}
\label{tab1}
\end{table}

Here is a few values of $P_{(d,\phi_d(n))}$, for $d\in\llbracket2,9\rrbracket$:
\begin{table}[ht]
\centering
\begin{tabular}{|c||c|c|c|c||c|}
\hline
\rowcolor{gray!40}$d$&$P_{(d,\phi_d(1))}$&$P_{(d,\phi_d(2))}$&$P_{(d,\phi_d(3))}$&$P_{(d,\phi_d(4))}$&$\alpha_d$\\\hline
$2$& $0.134$ & $0.131$ & $0.130$ & $0.130$ & $0.130$  \\\hline
$3$& $0.085$ & $0.089$ & $0.089$ & $0.089$ & $0.089$  \\\hline
$4$& $0.062$ & $0.067$ & $0.068$ & $0.068$ & $0.068$  \\\hline
$5$& $0.049$ & $0.054$ & $0.055$ & $0.055$ & $0.055$  \\\hline
$6$& $0.040$ & $0.045$ & $0.046$ & $0.046$ & $0.046$  \\\hline
$7$& $0.034$ & $0.039$ & $0.039$ & $0.040$ & $0.040$  \\\hline
$8$& $0.030$ & $0.034$ & $0.035$ & $0.035$ & $0.035$  \\\hline
$9$& $0.026$ & $0.030$ & $0.031$ & $0.031$ & $0.031$  \\\hline
\end{tabular}
\caption{Values of $P_{(d,\phi_d(n))}$ and $\alpha_d$, for $n\in\llbracket1,4\rrbracket$. These values are rounded to the nearest thousandth.}
\label{tab5}
\end{table}

\subsection{The second subsequence}

The second subsequence we will consider is $(P_{(d,\psi_d(n))})_{n\in\mathbb N^*}$ where $\psi_d$ is the function from $\mathbb N^*$ to $\mathbb N$ that 
maps $n$ to $(d+1)\times10^n-1$. We get the following result:
\begin{proposition}\label{sub2i}
The subsequence $(P_{(d,\psi_d(n))})_{n\in\mathbb N^*}$ converges to:
\begin{align*}
\frac{10\big(9-(9d-1)\ln(\frac{d+1}{d})+\ln(\frac{10d}{d+1})\big)}{81(d+1)}=\frac{10\big(9+\ln10+9d\ln(1-\frac{1}{d+1})\big)}{81(d+1)}\, .
\end{align*}
\end{proposition}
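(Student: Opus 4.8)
The plan is to mirror the proof of Proposition \ref{sub1i}, exploiting the fact that the numerator arising here contains, \emph{verbatim}, the complete-block sum already controlled there. First I would apply Proposition \ref{proi} with its bound $n$ replaced by $\psi_d(n)=(d+1)\times10^n-1$. Writing $k=\min\{i\in\mathbb N:d\times10^i>\psi_d(n)\}$, one checks that $d\times10^{n+1}=10d\times10^n>(d+1)\times10^n>\psi_d(n)$ while $d\times10^n<(d+1)\times10^n-1=\psi_d(n)$, so $k=n+1$. Since $\psi_d(n)=(d+1)\times10^n-1<(d+1)\times10^n=(d+1)\times10^{k-1}$, we land in the \emph{first} case of Proposition \ref{proi}, whose formula reads
\begin{equation*}
P_{(d,\psi_d(n))}=\frac{1}{(d+1)\times10^n-1}\Big(S_n+\sum_{b=d\times10^n}^{(d+1)\times10^n-1}\frac{b-\frac{(9d-1)\times10^n-8}{9}}{b}\Big),
\end{equation*}
where $S_n=\sum_{l=0}^{n-1}\big(\sum_{b=d\times10^l}^{(d+1)\times10^l-1}\frac{b-\frac{(9d-1)10^l-8}{9}}{b}+\sum_{a=(d+1)\times10^l}^{d\times10^{l+1}-1}\frac{\frac{10^{l+1}-1}{9}}{a}\big)$ is exactly the complete-block sum; I denote the trailing partial block by $T_n$.

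The key simplification is that $S_n$ is precisely the numerator of $P_{(d,\phi_d(n))}$ treated in Proposition \ref{sub1i}, i.e.\ $P_{(d,\phi_d(n))}=S_n/(d\times10^n-1)$. Hence I would rewrite
\begin{equation*}
P_{(d,\psi_d(n))}=\frac{d\times10^n-1}{(d+1)\times10^n-1}\,P_{(d,\phi_d(n))}+\frac{T_n}{(d+1)\times10^n-1}\,.
\end{equation*}
Since $\frac{d\times10^n-1}{(d+1)\times10^n-1}\to\frac{d}{d+1}$ and, by Proposition \ref{sub1i}, $P_{(d,\phi_d(n))}\to\alpha_d$, the first term converges to $\frac{d}{d+1}\alpha_d$ with no further estimation. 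All the delicate block-by-block bounding from Proposition \ref{sub1i} is thereby reused wholesale, and the only genuinely new work is the limit of $T_n/((d+1)\times10^n-1)$.

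For that last piece I would split $T_n=10^n-\frac{(9d-1)10^n-8}{9}H_n$, where $H_n=\sum_{b=d\times10^n}^{(d+1)\times10^n-1}\frac1b$ has exactly $10^n$ terms. Applying inequality (\ref{inelog}) with $p=d\times10^n$ and $q=(d+1)\times10^n-1$ squeezes $H_n$ between $\ln\frac{d+1}{d}$ and $\ln\frac{(d+1)\times10^n-1}{d\times10^n-1}$, both tending to $\ln\frac{d+1}{d}$. Combined with $\frac{10^n}{(d+1)\times10^n-1}\to\frac1{d+1}$ and $\frac{(9d-1)10^n-8}{9((d+1)\times10^n-1)}\to\frac{9d-1}{9(d+1)}$, this yields $\frac{T_n}{(d+1)\times10^n-1}\to\frac{9-(9d-1)\ln\frac{d+1}{d}}{9(d+1)}$. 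Adding the two limits and using $\ln\frac{10d}{d+1}=\ln10-\ln\frac{d+1}{d}$ reassembles exactly $\frac{10\big(9-(9d-1)\ln\frac{d+1}{d}+\ln\frac{10d}{d+1}\big)}{81(d+1)}$, the announced value (the second form following from $\ln\frac{d}{d+1}=\ln(1-\frac1{d+1})$). The main obstacle is essentially bookkeeping: confirming $k=n+1$ together with the first-case inequality, and checking that combining $\frac{d}{d+1}\alpha_d$ with the $T_n$-limit collapses to the stated closed form; the analytic content is light, since Proposition \ref{sub1i} already absorbs the hard squeeze on the complete blocks.
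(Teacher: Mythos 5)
Your proof is correct, and it takes a genuinely different — and more economical — route than the paper's. You and the paper both start from the same expansion: $k=n+1$, first case of Proposition \ref{proi}, giving the complete-block sum $S_n$ plus the trailing block $T_n$ (the paper simply absorbs $T_n$ into a sum of $d$-blocks over $l=0,\dots,n$). But from there the paper re-runs the entire block-by-block machinery of Proposition \ref{sub1i}: it bounds the whole numerator above and below via inequality (\ref{inelog}), producing explicit two-sided error terms of size $O(n/10^n)$ (inequalities (\ref{lbouni}) and (\ref{ubouni})), and concludes by a squeeze. You instead reuse Proposition \ref{sub1i} as a black box: recognizing $S_n=(d\times10^n-1)\,P_{(d,\phi_d(n))}$ turns the expansion into
\begin{align*}
P_{(d,\psi_d(n))}=\frac{d\times10^n-1}{(d+1)\times10^n-1}\,P_{(d,\phi_d(n))}+\frac{T_n}{(d+1)\times10^n-1}\,,
\end{align*}
so the only new estimate is the elementary squeeze on the single block $T_n=10^n-\frac{(9d-1)10^n-8}{9}H_n$, and the resulting limit $\frac{d}{d+1}\alpha_d+\frac{9-(9d-1)\ln(\frac{d+1}{d})}{9(d+1)}$ does collapse algebraically to the stated value — indeed this is exactly the identity the paper itself records later in the remark $f_d(\frac{1}{d+1})=\beta_d$ following Lemma \ref{lem1i}. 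What each approach buys: yours is shorter, avoids duplicating the delicate summation estimates (hence less room for the kind of bookkeeping slips visible in the paper's long displays), and makes the structural relation between $\alpha_d$ and $\beta_d$ transparent; the paper's recomputation yields explicit nonasymptotic error bounds quantifying the rate of convergence (consistent with the numerics of Tables \ref{tab2} and \ref{tab6}), which a purely limit-based reduction does not provide. Since the rest of the paper (e.g.\ Proposition \ref{pro3i}) only ever invokes the limit $\beta_d$ and not those explicit bounds, your proof suffices for all downstream results.
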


\begin{proof}
Let $n$ be a positive integer such that $n\ge2$. According to Proposition \ref{proi}, we have:
\begingroup\scriptsize
\begin{align*}
P_{(d,\psi_d(n))}&=P_{(d,(d+1)\times10^n-1)}\\
&=\frac{1}{(d+1)\times10^n-1}\Big(\sum_{l=0}^{n}\sum_{b=d\times10^l}^{(d+1)\times10^l-1}\frac{b-\frac{(9d-1)10^l-8}{9}}{b}+\sum_{l=0}^{n-1}
\sum_{a=(d+1)\times10^l}^{d\times10^{l+1}-1}\frac{\frac{10^{l+1}-1}{9}}{a}\Big)\,.
\end{align*}
\endgroup
Let us first find an appropriate lower bound of $P_{(d,\psi_d(n))}$ in a way very similar to that used in the proof of Proposition \ref{sub1i}:
\begingroup\scriptsize
\begin{align*}
P_{(d,\psi_d(n))}&\ge\frac{1}{(d+1)\times10^n}\Big(\frac{10(10^{n}-1)}{9}-\frac{(9d-1)\ln(\frac{d+1}{d})}{9}\times\frac{10(10^n-1)}{9}\\
&\enspace-\frac{9d-1}{9}\sum_{l=1}^{n}10^l\ln(1+\frac{\frac{1}{d(d+1)}}{10^l-\frac{1}{d}})+\frac{\ln(\frac{10d}{d+1})}{9}\times
\big(\frac{10^2(10^{n-1}-1)}{9}-(n-1)\big)\Big)\\
&\ge\frac{90-10(9d-1)\ln(\frac{d+1}{d})+10\ln(\frac{10d}{d+1})}{81(d+1)}-\frac{90+100\ln(\frac{10d}{d+1})+9n\ln(\frac{10d}{d+1})}{81(d+1)\times10^n}\\
&\enspace-d\sum_{l=1}^{n}\frac{10^l\frac{\frac{1}{d+1}}{d\times10^l-1}}{(d+1)\times10^n}\,.
\end{align*}
\endgroup
Then:
\begingroup\scriptsize
\begin{align}\label{lbouni}
P_{(d,\psi_d(n))}&\ge\frac{90-10(9d-1)\ln(\frac{d+1}{d})+10\ln(\frac{10d}{d+1})}{81(d+1)}-
\frac{90+100\ln(\frac{10d}{d+1})+9n\ln(\frac{10d}{d+1})+81dn}{(d+1)\times10^n}\,.
\end{align}
\endgroup
Let us now find an appropriate upper bound of $P_{(d,\psi_d(n))}$ using the proof of Proposition \ref{sub1i}:
\begingroup\scriptsize
\begin{align*}
P_{(d,\psi_d(n))}&\le\frac{1}{(d+1)\times10^n-1}\Big(\sum_{l=0}^n10^l-\frac{\ln(\frac{d+1}{d})}{9}\sum_{l=0}^{n}\big((9d-1)10^l-8\big)\\
&\enspace+\sum_{l=0}^{n-1}\frac{10^{l+1}}{9}\big(\ln(\frac{10d}{d+1})+\ln(1+\frac{\frac{1}{d+1}-\frac{1}{10d}}{10^l-\frac{1}{d+1}})\big)\Big)\\
&\le\frac{1}{(d+1)(10^n-\frac{1}{d+1})}\Big(\frac{10^{n+1}-1}{9}-\frac{\ln(\frac{d+1}{d})}{9}
\big(\frac{(9d-1)(10^{n+1}-1)}{9}-8(n+1))\\
&\enspace+\frac{10\ln(\frac{10d}{d+1})}{9}\times\frac{10^n-1}{9}+\frac{10}{9}\sum_{l=0}^{n-1}\frac{10^l(1-\frac{d+1}{10d})}{(d+1)10^l-1}\Big)\\
&\le\frac{1}{(d+1)(10^n-\frac{1}{d+1})}\Big(\frac{10(10^n-\frac{1}{d+1})-1+\frac{10}{d+1}}{9}+
\frac{10\ln(\frac{10d}{d+1})(10^n-\frac{1}{d+1}+\frac{1}{d+1}-1)}{81}\\
&\enspace-\frac{\ln(\frac{d+1}{d})}{9}\big(\frac{(9d-1)(10(10^n-\frac{1}{d+1})-1+\frac{10}{d+1})}{9}-8(n+1)\big)+\frac{10}{9}\sum_{l=0}^{n-1}1\Big)\,.
\end{align*}
\endgroup
Thereby:
\begingroup\scriptsize
\begin{align}\label{ubouni}
P_{(d,\psi_d(n))}&\le\frac{90-10(9d-1)\ln(\frac{d+1}{d})+10\ln(\frac{10d}{d+1})}{81(d+1)}+\frac{\frac{90}{d+1}+72(n+1)\ln(\frac{d+1}{d})+90n}
{81\big((d+1)\times10^n-1\big)}\,.
\end{align}
\endgroup
Bounds brought to light in inequalities (\ref{lbouni}) and (\ref{ubouni}) and the fact that:
\begin{align*}
&\lim\limits_{n\to+\infty}(\frac{90+100\ln(\frac{10d}{d+1})+9n\ln(\frac{10d}{d+1})+81dn}{(d+1)\times10^n})=0
\quad\text{and}\\
&\lim\limits_{n\to+\infty}(\frac{\frac{90}{d+1}+72(n+1)\ln(\frac{d+1}{d})+90n}{81\big((d+1)\times10^n-1\big)})=0
\end{align*}
lead to the expected result.
\end{proof}

Let us denote by $\beta_d$ the limit of $(P_{(d,\psi_d(n))})_{n\in\mathbb N^*}$: 
\begin{align*}
\beta_d=\frac{10\big(9+\ln10+9d\ln(1-\frac{1}{d+1})\big)}{81(d+1)}\,.
\end{align*}

Here is the first values of $P_{(1,\psi(n))}$ ($\beta_1\approx0.313$):
\begin{table}[ht]
\centering
\begin{tabular}{|c||c|c|c|}
\hline
\rowcolor{gray!40}$n$&$\psi(n)$&$P_{(i,\psi(n))}$&$P_{(i,\psi(n))}-\beta_1$\\\hline
$1$& $19$ & $0.373$ & $6.00\times10^{-2}$   \\\hline
$2$& $199$ & $0.321$ & $7.93\times10^{-3}$   \\\hline
$3$& $1999$ & $0.314$ & $1.01\times10^{-3}$   \\\hline
$4$& $19999$ & $0.313$ & $1.23\times10^{-4}$   \\\hline
$5$& $199999$ & $0.313$ & $1.45\times10^{-5}$   \\\hline
\end{tabular}
\caption{First five values $P_{(1,\psi(n))}$ and $P_{(1,\psi(n))}-\beta_1$. We round off these values to three significant digits.}
\label{tab2}
\end{table}

Here is a few values of $P_{(d,\psi_d(n))}$, for $d\in\llbracket2,9\rrbracket$:
\begin{table}[ht]
\centering
\begin{tabular}{|c||c|c|c|c||c|}
\hline
\rowcolor{gray!40}$d$&$P_{(d,\psi_d(1))}$&$P_{(d,\psi_d(2))}$&$P_{(d,\psi_d(3))}$&$P_{(d,\psi_d(4))}$&$\beta_d$\\\hline
$2$& $0.176$ & $0.166$ & $0.165$ & $0.165$ & $0.165$  \\\hline
$3$& $0.110$ & $0.109$ & $0.109$ & $0.109$ & $0.109$  \\\hline
$4$& $0.078$ & $0.080$ & $0.081$ & $0.0.081$ & $0.081$  \\\hline
$5$& $0.060$ & $0.063$ & $0.064$ & $0.064$ & $0.064$  \\\hline
$6$& $0.049$ & $0.052$ & $0.052$ & $0.053$ & $0.053$  \\\hline
$7$& $0.041$ & $0.044$ & $0.045$ & $0.045$ & $0.045$  \\\hline
$8$& $0.035$ & $0.038$ & $0.039$ & $0.039$ & $0.039$  \\\hline
$9$& $0.031$ & $0.034$ & $0.034$ & $0.034$ & $0.034$  \\\hline
\end{tabular}
\caption{Values of $P_{(d,\psi_d(n))}$ and $\beta_d$, for $n\in\llbracket1,4\rrbracket$. These values are rounded to the nearest thousandth.}
\label{tab6}
\end{table}

\section{The graph of \texorpdfstring{$(P_{(d,n)})_{n\in\mathbb N^*}$}{}}

Let us first plot the graph of the sequence $(P_{(1,n)})_{n\in\mathbb N^*}$ for values of $n$ from $1$ to $1200$ (Figure \ref{fig1}). Then we plot a 
second graph of $P_{(1,n)}$ \textit{versus} $\log(n)$, for $n\in\llbracket1,32000\rrbracket$ (Figure \ref{fig2}). On this graph, the four dots 
represented by red circles are associated with the first values of $(P_{(1,\phi(n))})_{n\in\mathbb N^*}$ and the blue ones are associated with the 
subsequence $(P_{(1,\psi(n))})_{n\in\mathbb N^*}$. Their distances to the same-coloured horizontal dotted asymptote tend towards $0$. The linear 
equations of these lines are $y=\alpha_1$ and $y=\beta_1$ respectively (see Propositions \ref{sub1i} and \ref{sub2i}).

\begin{figure}[ht]
\begin{minipage}{0.47\linewidth}
\centering
\includegraphics[scale=0.44,clip=true]{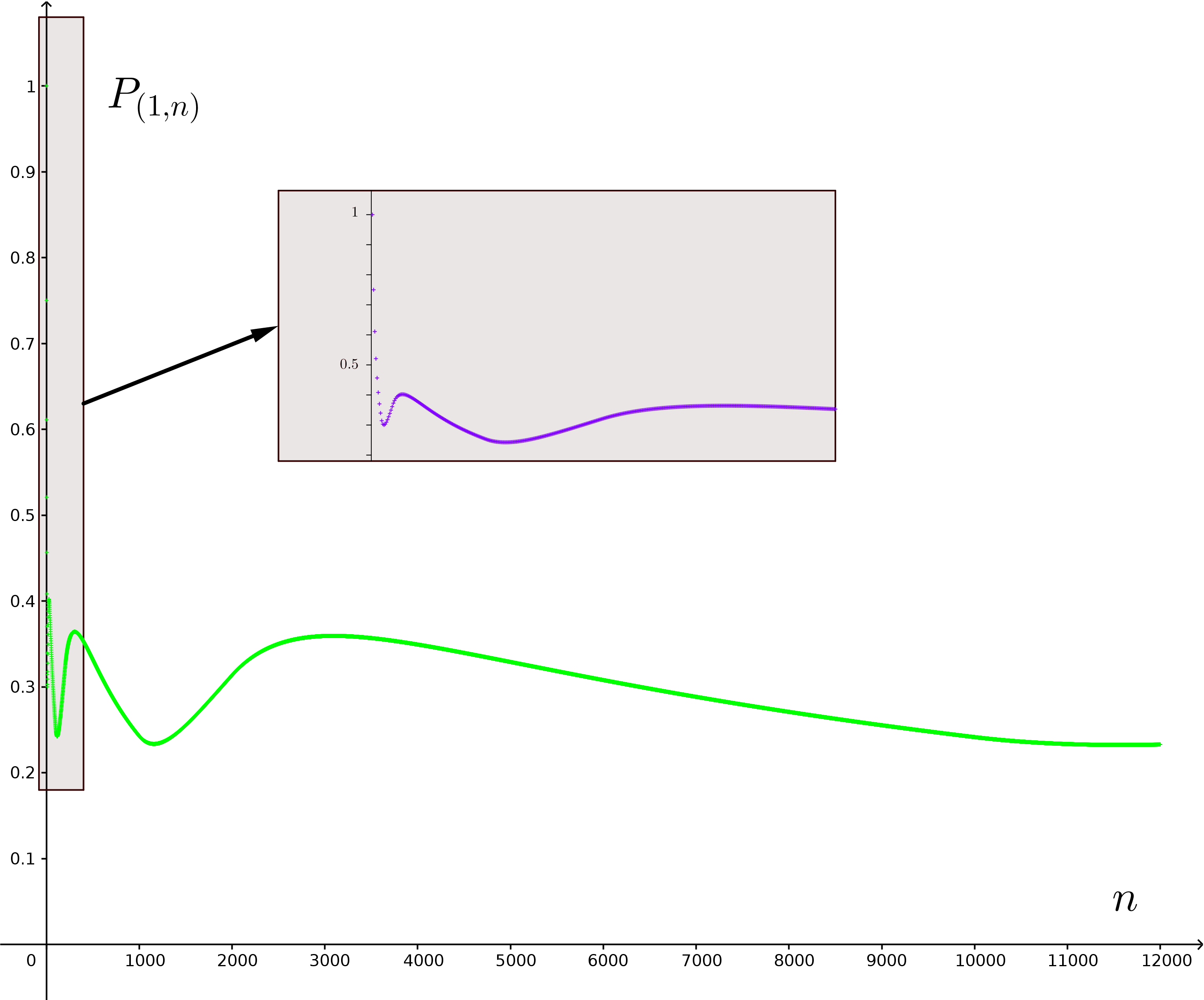}
\caption{Graph of $(P_{(1,n)})_{n\in\mathbb N^*}$.}\label{fig1}
\vspace{4.94\baselineskip}

\end{minipage}\hfill
\begin{minipage}{0.49\linewidth}
\centering
\includegraphics[scale=0.47,clip=true]{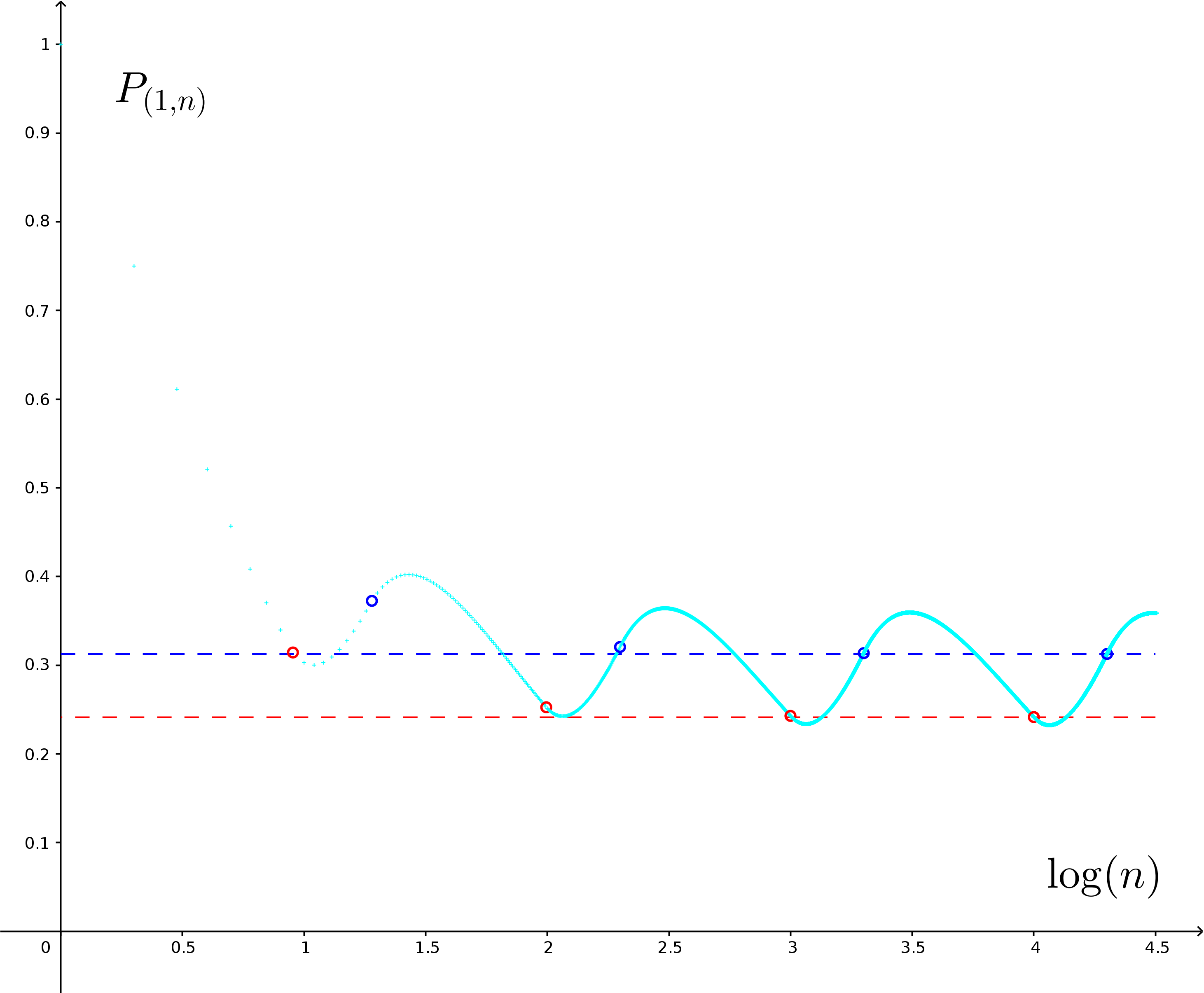}
\caption{Graph of $P_{(1,n)}$ \textit{versus} $\log(n)$, values of $P_{(1,\phi(n))}$ being in red and those of $P_{(1,\psi(n))}$ being in deep blue. 
Limits $\alpha_1$ and $\beta_1$ of these two subsequences are represented by horizontal asymptotes.}\label{fig2}
\end{minipage}
\end{figure}

Through Figure \ref{fig2}, it is clear that the proportion of $1$ as leading digit structurally fluctuate and does not follow Benford's Law.

Let us additionally plot graphs of sequences $(P_{(d,n)})_{n\in\mathbb N^*}$ for values of $n$ from $1$ to $400$ 
(Figure \ref{fig3}). Then we plot graphs of $P_{(d,n)}$ \textit{versus} $\log(n)$, for $n\in\llbracket1,32000\rrbracket$ (Figure \ref{fig4}).

\begin{figure}[ht]
\begin{minipage}{0.48\linewidth}
\centering
\includegraphics[scale=0.5,clip=true]{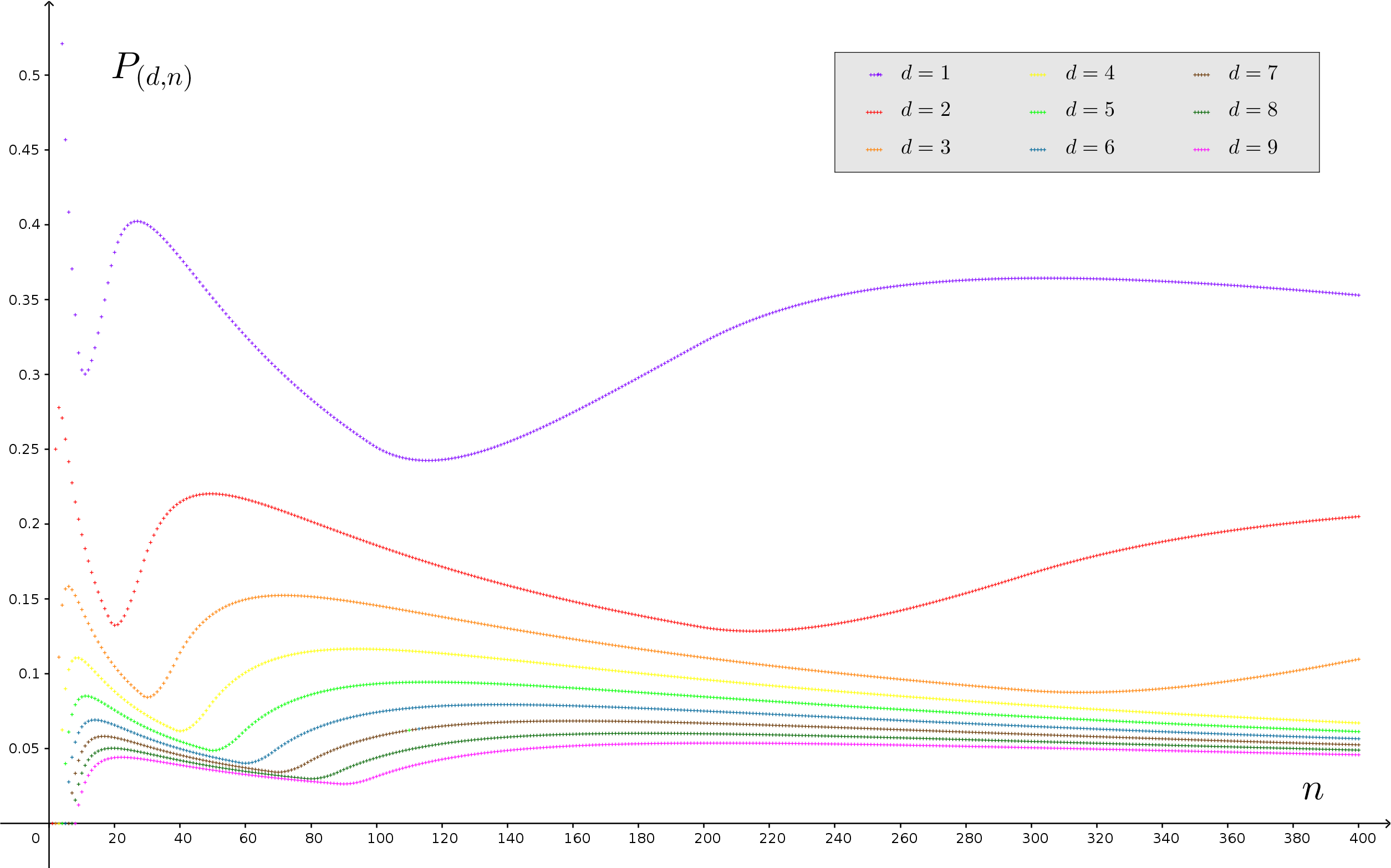}
\caption{Graphs of $(P_{(d,n)})_{n\in\mathbb N^*}$, for $d\in\llbracket1,9\rrbracket$.}\label{fig3}
\vspace{0.695\baselineskip}

\end{minipage}\hfill
\begin{minipage}{0.48\linewidth}
\centering
\includegraphics[scale=0.55,clip=true]{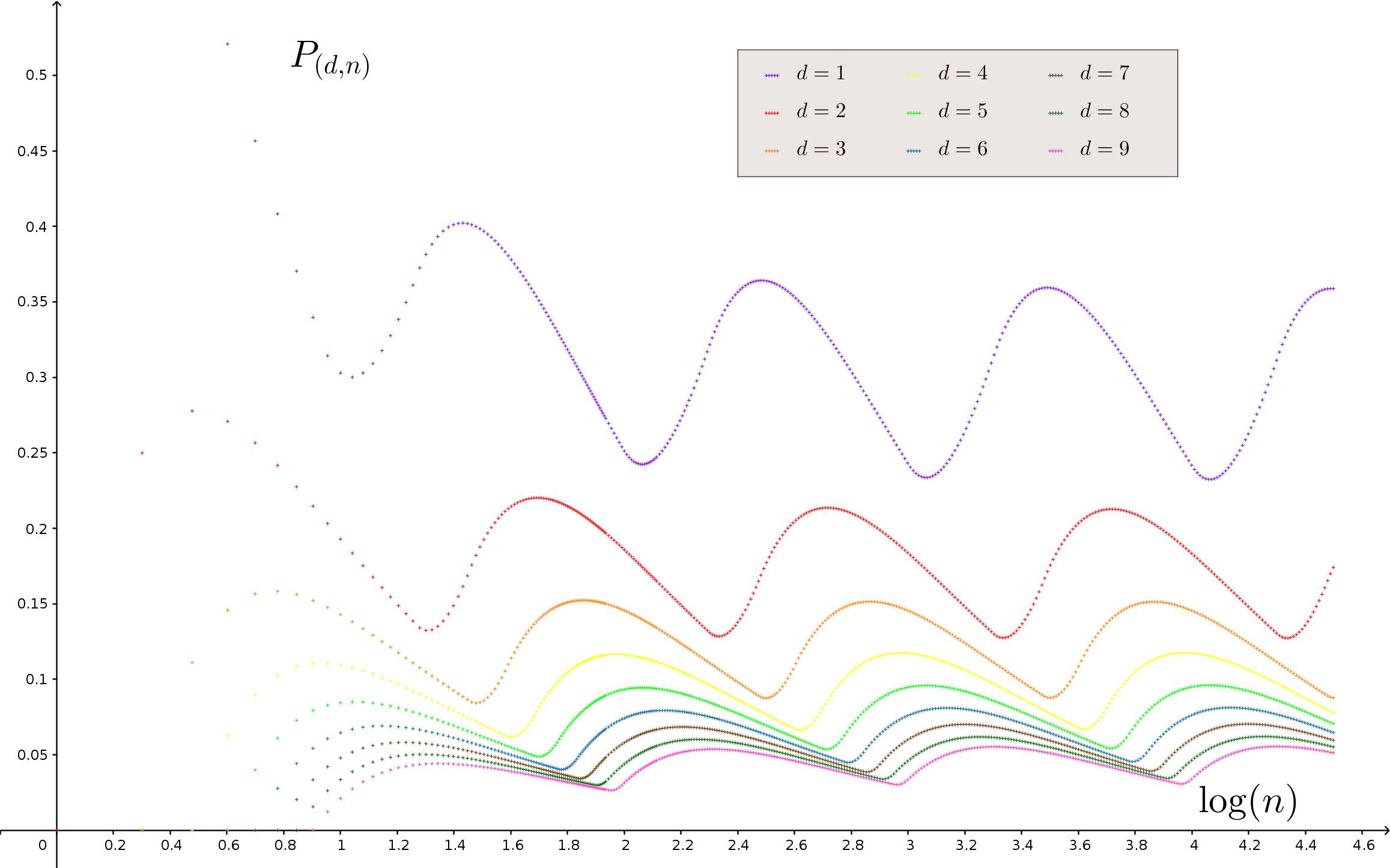}
\caption{For $d\in\llbracket1,9\rrbracket$, graphs of $P_{(d,n)}$ \textit{versus} $\log(n)$. Note that points have not been all represented.}\label{fig4}
\end{minipage}
\end{figure}

Through Figure \ref{fig4}, it is once more clear that the proportion of each $d$ as leading digit, $d\in\llbracket1,9\rrbracket$, structurally fluctuate 
and does not follow Benford's Law.

For each $d\in\llbracket1,9\rrbracket$ the values seem to fluctuate between two values, under the following constraint:

\begin{proposition}
For all $n\in\mathbb{N}^*$ such that $n\ge10$ and for all $(p,q)\in\llbracket1,9\rrbracket^2$ such that $p<q$, we have:
\begin{align*}
P_{(p,n)}>P_{(q,n)}\,.
\end{align*}
\end{proposition}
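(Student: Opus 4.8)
The plan is to bypass the explicit closed forms of Proposition \ref{proi} and work directly with the conditional probabilities, reducing the statement to a pointwise comparison. Combining (\ref{espi}) and (\ref{equi}), I would write, for every digit $d$,
\begin{align*}
P_{(d,n)}=\frac{1}{n}\sum_{i=1}^n\p(L_n=d\,|\,D_n=i)=\frac{1}{n}\sum_{i=1}^n\frac{N_d(i)}{i},
\end{align*}
where $N_d(i)$ denotes the number of integers in $\llbracket1,i\rrbracket$ whose leading digit is $d$. To obtain $P_{(p,n)}>P_{(q,n)}$ it then suffices to prove that the summands dominate term by term, i.e. $N_p(i)\ge N_q(i)$ for every $i$, together with at least one strict inequality among $i\in\llbracket1,n\rrbracket$.

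The core step is the monotonicity of $d\mapsto N_d(i)$ for fixed $i$. I would let $D$ be the number of digits of $i$, so that $10^{D-1}\le i\le10^D-1$, and split the integers with leading digit $d$ across the blocks $\llbracket10^l,10^{l+1}-1\rrbracket$. Inside a completed block ($0\le l\le D-2$) the leading digit $d$ occupies exactly the $10^l$ integers $\llbracket d\times10^l,(d+1)\times10^l-1\rrbracket$, a count independent of $d$; these blocks therefore contribute $\frac{10^{D-1}-1}{9}$ to every digit. Hence all the distinguishing contribution comes from the last, possibly incomplete, block $\llbracket10^{D-1},i\rrbracket$, in which the number of integers with leading digit $d$ equals $g_d(i):=\max\big(0,\min(10^{D-1},\,i-d\times10^{D-1}+1)\big)$. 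Since $d\mapsto d\times10^{D-1}$ is increasing, $g_d(i)$ is non-increasing in $d$, so $N_p(i)=\frac{10^{D-1}-1}{9}+g_p(i)\ge\frac{10^{D-1}-1}{9}+g_q(i)=N_q(i)$ whenever $p<q$. This already yields $P_{(p,n)}\ge P_{(q,n)}$.

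For the strict inequality I would exhibit a single index where the domination is strict. Taking $i=p$: since $p\le9<10\le n$ the index $p$ lies in $\llbracket1,n\rrbracket$, and the only integer in $\llbracket1,p\rrbracket$ with leading digit $p$ is $p$ itself, so $N_p(p)=1$, whereas no integer in $\llbracket1,p\rrbracket$ can have leading digit $q>p$, so $N_q(p)=0$. Thus the $i=p$ term contributes $\frac{1}{p}>0$ to the difference $P_{(p,n)}-P_{(q,n)}$, while every other term is non-negative by the previous paragraph; summing gives the claimed strict inequality. Note that the argument handles arbitrary $p<q$ directly, with no need to reduce to consecutive digits.

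I expect the only delicate point to be the bookkeeping of the last block: checking that $g_d(i)$ correctly interpolates between the three regimes ($i$ below, inside, or above the sub-block of leading digit $d$), that it reduces properly when $D=1$, and that the completed blocks genuinely contribute the same count $10^l$ for all nine digits, including the endpoint case $d=9$ where the sub-block is $\llbracket9\times10^l,10^{l+1}-1\rrbracket$. Once this is verified the rest is immediate; I also observe that the domination $N_p(i)\ge N_q(i)$ holds for every $n$, the hypothesis $n\ge10$ serving only to place the strict index $i=p$ within range (equivalently, to guarantee $p\le n$, hence that all nine leading digits are attainable).
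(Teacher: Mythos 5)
Your proof is correct, and it takes a genuinely cleaner route than the paper's. The paper proves this proposition downstream of Proposition \ref{proi}: it substitutes the closed-form expressions for $P_{(p,n)}$ and $P_{(q,n)}$, splits into three cases according to the position of $n$ (whether $k_p=k_q+1$ with $n$ in the ``$p$-block'', $k_p=k_q+1$ with $n$ between blocks, or $k_p=k_q$), and regroups the difference of the two formulas into sums of terms of the form $\frac{b-B_{p,l}-A_{l-1}}{b}$, $\frac{A_l-A_{l-1}}{a}$, $\frac{A_l-(b-B_{q,l})}{b}$, each checked to be non-negative. You work upstream of the closed forms instead: writing $P_{(d,n)}=\frac{1}{n}\sum_{i=1}^n N_d(i)/i$ and proving the pointwise monotonicity $N_p(i)\ge N_q(i)$ by the block decomposition (full decades contribute $10^l$ to every digit; only the truncated top decade discriminates, and there the count $g_d(i)$ is non-increasing in $d$). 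Mathematically the two arguments verify the same pointwise domination of conditional probabilities --- the paper's regrouped summands are exactly your differences $N_p(i)-N_q(i)$ in disguise --- but your organization buys several things: no case analysis on $n$, a uniform treatment of arbitrary $p<q$ (the paper's bookkeeping is tied to which of the $p$- and $q$-decades $n$ lands in), a transparent localization of where strictness and the hypothesis $n\ge10$ enter (the single index $i=p$), and even a quantitative bound
\begin{align*}
P_{(p,n)}-P_{(q,n)}\ge\frac{1}{np}\,,
\end{align*}
which the paper's proof does not state. The one thing the paper's version buys is that it re-exercises the formulas of Proposition \ref{proi} that the rest of the article runs on, but as a proof of this proposition yours is shorter and easier to check; your flagged delicate points (the three regimes of $g_d(i)$, the $d=9$ endpoint, the $D=1$ degeneration) are all verified correctly.
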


The relative position of graphs of $P_{(d,n)}$, for $d\in\llbracket1,9\rrbracket$, can be observed on Figures \ref{fig3} and \ref{fig4}.

\begin{proof}
For all $n\in\mathbb{N}^*$ such that $n\ge10$ and for all $(p,q)\in\llbracket1,9\rrbracket^2$ such that $p<q$, we denote by $k_p$ and $k_q$ 
the positive integers such that $k_p=\min\{i\in\mathbb N:p\times10^i>n\}$ and $k_q=\min\{i\in\mathbb N:q\times10^i>n\}$. We note that $k_p\ge k_q$. 
For $(r,l)\in\mathbb{N}^2$, let $A_{l}$ and $B_{r,l}$ be real numbers such that $A_{l}=\frac{10^{l+1}-1}{9}$ and 
$B_{r,l}=\frac{(9r-1)\times10^{l}-8}{9}$. We consider that $A_{-1}=0$. Third cases can be distinguished:

In the first case $(q+1)\times10^{k_p-2}\le p\times10^{k_p-1}\le n<(p+1)\times10^{k_p-1}\le q\times10^{k_p-1}$. Thus we have $k_p=k_q+1$. Thanks to 
Proposition \ref{proi} we obtain:
\begingroup\scriptsize
\begin{align*}
P_{(p,n)}-P_{(q,n)}&=\frac{1}{n}\Big(\sum_{l=0}^{k_p-2}(\sum_{b=p\times10^l}^{(p+1)\times10^l-1}\frac{b-B_{p,l}}{b}+
\sum_{a=(p+1)\times10^l}^{p\times10^{l+1}-1}\frac{A_{l}}{a})+\sum_{b=p\times10^{k_p-1}}^{n}\frac{b-B_{p,k_p-1}}{b}\Big)\\
&\enspace-\frac{1}{n}\Big(\sum_{l=0}^{k_p-3}(\sum_{b=q\times10^l}^{(q+1)\times10^l-1}\frac{b-B_{q,l}}{b}
+\sum_{a=(q+1)\times10^l}^{q\times10^{l+1}-1}\frac{A_{l}}{a})\\
&\enspace+\sum_{b=q\times10^{k_p-2}}^{(q+1)\times10^{k_p-2}-1}\frac{b-B_{q,k_p-2}}{b}+\sum_{a=(q+1)\times10^{k_p-2}}^{n}\frac{A_{k_p-2}}{a}\Big)\\
&=\frac{1}{n}\Big(\sum_{l=0}^{k_p-3}\big(\sum_{b=p\times10^l}^{(p+1)\times10^l-1}\frac{b-B_{p,l}}{b}+
\sum_{a=(p+1)\times10^l}^{p\times10^{l+1}-1}\frac{A_{l}}{a}-\sum_{b=q\times10^l}^{(q+1)\times10^l-1}\frac{b-B_{q,l}}{b}\\
&\enspace-\sum_{a=(q+1)\times10^l}^{q\times10^{l+1}-1}\frac{A_{l}}{a}\big)+\sum_{b=p\times10^{k_p-2}}^{(p+1)\times10^{k_p-2}-1}\frac{b-B_{p,{k_p-2}}}{b}+
\sum_{a=(p+1)\times10^{k_p-2}}^{p\times10^{k_p-1}-1}\frac{A_{k_p-2}}{a}\\
&\enspace+\sum_{b=p\times10^{k_p-1}}^{n}\frac{b-B_{p,k_p}}{b}-
\sum_{b=q\times10^{k_p-2}}^{(q+1)\times10^{k_p-2}-1}\frac{b-B_{q,k_p}}{b}\\
&\enspace-\sum_{a=(q+1)\times10^{k_p-2}}^{n}\frac{A_{k_p-2}}{a}\Big)\\
&=\frac{1}{n}\Big(\sum_{l=0}^{k_p-2}\big(\sum_{b=p\times10^l}^{(p+1)\times10^l-1}\frac{b-B_{p,l}-A_{l-1}}{b}+
\sum_{a=(p+1)\times10^l}^{q\times10^l-1}\frac{A_{l}-A_{l-1}}{a}\\
&\enspace+\sum_{b=q\times10^l}^{(q+1)\times10^l-1}\frac{A_l-(b-B_{q,l})}{b}+\sum_{a=(q+1)\times10^l}^{p\times10^{l+1}-1}\frac{A_{l}-A_{l}}{a}\big)\\
&\enspace+\sum_{b=p\times10^{k_p-1}}^{n}\frac{b-B_{p,k_p-1}-A_{k_p-2}}{b}\Big)\,.
\end{align*}
\endgroup
Furthermore, for all $(r,l)\in\mathbb{N}^2$, we have $A_l>A_{l-1}$ and:
\begin{align*}\forall s\in\llbracket r\times10^l,(r+1)\times10^{l+1}-1&\rrbracket,\\
s-B_{r,l}-A_{l-1}&=s-\frac{(9r-1)\times10^{l}-8}{9}-\frac{10^l-1}{9}\\
&=s-(r\times10^{l}-1)>0\\
A_l-(s-B_{r,l})&=\frac{10^{l+1}-1}{9}-s+\frac{(9r-1)\times10^{l}-8}{9}\\
&=(r+1)\times10^l-1-s\ge0\,.
\end{align*}
Consequently, in this case, $P_{(p,n)}-P_{(q,n)}>0$.

In the second case $(p+1)\times10^{k_p-1}\le n<q\times10^{k_p-1}$. Thus we have $k_p=k_q+1$. Thanks to Proposition \ref{proi} we obtain:
\begingroup\scriptsize
\begin{align*}
P_{(p,n)}-P_{(q,n)}&=\frac{1}{n}\Big(\sum_{l=0}^{k_p-2}(\sum_{b=p\times10^l}^{(p+1)\times10^l-1}\frac{b-B_{p,l}}{b}+
\sum_{a=(p+1)\times10^l}^{p\times10^{l+1}-1}\frac{A_{l}}{a})\\
&\enspace+\sum_{b=p\times10^{k_p-1}}^{(p+1)\times10^{k_p-1}-1}\frac{b-B_{p,k_p-1}}{b}+\sum_{a=(p+1)\times10^{k_p-1}}^{n}\frac{A_{k_p-1}}{a}\Big)
-\frac{1}{n}\Big(\sum_{l=0}^{k_p-3}\\
&\enspace(\sum_{b=q\times10^l}^{(q+1)\times10^l-1}\frac{b-B_{q,l}}{b}+\sum_{a=(q+1)\times10^l}^{q\times10^{l+1}-1}\frac{A_{l}}{a})+
\sum_{b=q\times10^{k_p-2}}^{(q+1)\times10^{k_p-2}-1}\frac{b-B_{q,k_p-2}}{b}\\
&\enspace+\sum_{a=(q+1)\times10^{k_p-2}}^{n}\frac{A_{k_p-2}}{a}\Big)\\
&=\frac{1}{n}\Big(\sum_{l=0}^{k_p-2}\big(\sum_{b=p\times10^l}^{(p+1)\times10^l-1}\frac{b-B_{p,l}-A_{l-1}}{b}+
\sum_{a=(p+1)\times10^l}^{q\times10^l-1}\frac{A_{l}-A_{l-1}}{a}\\
&\enspace+\sum_{b=q\times10^l}^{(q+1)\times10^l-1}\frac{A_l-(b-B_{q,l})}{b}\big)
+\sum_{b=p\times10^{k_p-1}}^{(p+1)\times10^{k_p-1}-1}\frac{b-B_{p,k_p-1}-A_{k_p-2}}{b}\\
&\enspace+\sum_{a=(p+1)\times10^{k_p-1}}^{n}\frac{A_{k_p-1}-A_{k_p-2}}{a}\Big)\,.
\end{align*}
\endgroup
Consequently, in this case, $P_{(p,n)}-P_{(q,n)}>0$.

In the third case $q\times10^{k_p-1}\le n<p\times10^{k_p}$. Thus we have $k_p=k_q$. Thanks to Proposition \ref{proi} we obtain:
\begingroup\scriptsize
\begin{align*}
P_{(p,n)}-P_{(q,n)}
&=\frac{1}{n}\Big(\sum_{l=0}^{k_p-2}\big(\sum_{b=p\times10^l}^{(p+1)\times10^l-1}\frac{b-B_{p,l}-A_{l-1}}{b}+
\sum_{a=(p+1)\times10^l}^{q\times10^l-1}\frac{A_{l}-A_{l-1}}{a}\\
&\enspace+\sum_{b=q\times10^l}^{(q+1)\times10^l-1}\frac{A_l-(b-B_{q,l})}{b}\big)
+\sum_{b=p\times10^{k_p-1}}^{(p+1)\times10^{k_p-1}-1}\frac{b-B_{p,k_p-1}-A_{k_p-2}}{b}\\
&\enspace+\sum_{a=(p+1)\times10^{k_p-1}}^{q\times10^{k_p-1}}\frac{A_{k_p-1}-A_{k_p-2}}{a}+
\sum_{b=q\times10^{k_p-1}}^{\min((q+1)\times10^{k_p-1}-1,n)}\frac{A_l-(b-B_{q,l})}{b}\Big)\,.
\end{align*}
\endgroup
Consequently, in this latter case, $P_{(p,n)}-P_{(q,n)}>0$.
\end{proof}

\begin{remark}
For $n\in\mathbb{N}^*$, we have, if $d>n$, $P_{(d,n)}=0$. Hence for all $n\in\mathbb N^*$ and for all 
$(p,q)\in\llbracket1,9\rrbracket^2$ such that $p<q$, we have:
\begin{align*}
P_{(p,n)}\ge P_{(q,n)}\,.
\end{align*}
\end{remark}

Let us from now on denote by $k_d$ the positive integer such that $k_d=\min\{i\in\mathbb N:d\times10^i>n\}$. Through Figures \ref{fig2} and \ref{fig4}, 
"delayed effects" appear to exist, in particular after a long sequence of numbers whose leading digit is $d$. Let us examine these features in more 
detail. We study for this purpose the increasing or decreasing nature of sequences $(P_{(d,n)})_{n\in\mathbb N^*}$, for $d\in\llbracket1,9\rrbracket$:

\begin{proposition}$i\in\{0,1\}$.
\begin{align*}
P_{(d,n+1)}-P_{(d,n)}=\frac{1}{n+1}(\frac{c_i}{n+1}-P_{(d,n)}),
\end{align*}
where:
\begingroup\small
\[c_i=
\left \{
\begin{array}{l @{\quad} l}
    n+1-\frac{(9d-1)10^{k_d}-8}{9} & \text{if }\quad  n+1\in\llbracket d\times10^{k_d},(d+1)\times10^{k_d}
    -1\rrbracket\\
    \frac{10^{k_d+1}-1}{9} & \text{if }\quad n+1\in\llbracket(d+1)\times10^{k_d},d\times10^{k_d+1}
    -1\rrbracket
\end{array}
\right.\]
\endgroup
\end{proposition}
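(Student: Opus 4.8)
The plan is to recognize that $P_{(d,n)}$ is nothing but the running average of the conditional probabilities computed in the proof of Proposition \ref{proi}. The decisive observation is that $\p(L_n=d\mid D_n=i)$ does not depend on $n$: conditioned on the first die showing $i$, the second die is an $i$-sided fair die, so this conditional probability is a function of $i$ and $d$ alone. I would accordingly set $g(i)=\p(L_n=d\mid D_n=i)$ and record, straight from equations (\ref{p1i}) and (\ref{p2i}), that $g(i)=\frac{1}{i}\big(i-\frac{(9d-1)10^{k_i-1}-8}{9}\big)$ when the leading digit of $i$ is $d$ and $g(i)=\frac{1}{i}\cdot\frac{10^{k_i}-1}{9}$ otherwise, where $k_i=\min\{k\in\mathbb N:d\times10^k>i\}$.

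With this notation, equations (\ref{equi}) and (\ref{espi}) give $P_{(d,n)}=\frac1n\sum_{i=1}^n g(i)$, so that $P_{(d,n)}$ is the arithmetic mean of the first $n$ values of $g$. The recurrence is then the elementary identity for running averages: writing $S_n=\sum_{i=1}^n g(i)=nP_{(d,n)}$, one has $(n+1)P_{(d,n+1)}=S_n+g(n+1)=nP_{(d,n)}+g(n+1)$, whence $P_{(d,n+1)}-P_{(d,n)}=\frac{1}{n+1}\big(g(n+1)-P_{(d,n)}\big)$. This already displays the announced shape, and it only remains to identify $g(n+1)$ with $\frac{c_i}{n+1}$.

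The last step is bookkeeping on the index $k_d$. Taking $i=n+1$ in the two formulas for $g$, I would split according to whether the leading digit of $n+1$ is $d$, which is precisely the dichotomy between the two intervals defining $c_i$: on $\llbracket d\times10^{k_d},(d+1)\times10^{k_d}-1\rrbracket$ the leading digit is $d$, and on $\llbracket(d+1)\times10^{k_d},d\times10^{k_d+1}-1\rrbracket$ it is not. In either interval one has $d\times10^{k_d}\le n+1<d\times10^{k_d+1}$, hence $k_{n+1}=k_d+1$ and the relevant exponent $k_{n+1}-1$ equals $k_d$; substituting this into the formulas for $g$ reduces $(n+1)g(n+1)$ to exactly $n+1-\frac{(9d-1)10^{k_d}-8}{9}$ in the first case (giving $c_0$) and to $\frac{10^{k_d+1}-1}{9}$ in the second (giving $c_1$). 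The only real obstacle is this matching of indices—one must make sure the $k$ of Proposition \ref{proi}, evaluated at $n+1$, is consistently shifted by one relative to the $k_d$ attached to $n$—whereas the probabilistic content, that $P_{(d,n)}$ is a running average obeying the standard one-step update, is immediate once the $n$-independence of the conditional probabilities is noticed.
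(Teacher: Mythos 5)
Your proof is correct and follows essentially the same route as the paper: the paper's own one-line proof is exactly the running-average update $(n+1)P_{(d,n+1)}=nP_{(d,n)}+\frac{c_i}{n+1}$, where the increment $\frac{c_i}{n+1}$ is the conditional probability $\p(L=d\mid D=n+1)$ coming from equations (\ref{p1i}) and (\ref{p2i}) in the proof of Proposition \ref{proi}. Your extra care with the index shift (reading the interval membership so that the exponent $k_{n+1}-1$ of Proposition \ref{proi} equals the $k_d$ appearing in $c_i$) simply makes explicit the bookkeeping that the paper leaves implicit.
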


\begin{proof}
It is based on the formulas of Proposition \ref{proi}. Indeed:
\begin{align*}
P_{(d,n+1)}=\frac{1}{n+1}(nP_{(d,n)}+\frac{c_i}{n+1})=P_{(d,n)}+\frac{1}{n+1}(\frac{c_i}{n+1}-P_{(d,n)}).
\end{align*}
\end{proof}

Through this proposition the obvious condition regarding the increasing or decreasing nature of the sequence is underscored: whether $P_{(d,n)}$ value, 
for $d\in\llbracket1,9\rrbracket$, is less or greater than the appropriate $\frac{c_i}{n+1}$ value, for $n\in\mathbb N^*$ and $i\in\{0,1\}$. Finding the 
approximate values of $n$ for which the increasing or decreasing nature of the sequence $(P_{(d,n)})_{n\in\mathbb N^*}$ appears is henceforth the aim of 
that section. We first provide a proposition similar to the previous one:

\begin{proposition}\label{pro2i}
If $n\in\llbracket d\times10^{k_d},(d+1)\times10^{k_d}-1\rrbracket$, then we have:
\begin{align*}
P_{(d,n)}=\frac{1}{n}\Big(P_{(d,d\times10^{k_d}-1)}\times(d\times10^{k_d}-1)+\sum_{b=d\times10^{k_d}}^n\frac{b-\frac{(9d-1)10^{k_d}-8}{9}}{b}\Big)\,.
\end{align*}

If $n\in\llbracket(d+1)\times10^{k_d},d\times10^{k_d+1}-1\rrbracket$, then we have:
\begin{align*}
P_{(d,n)}=\frac{1}{n}\Big(P_{(d,(d+1)\times10^{k_d}-1)}\times((d+1)\times10^{k_d}-1)+\sum_{a=(d+1)\times10^{k_d}}^n\frac{\frac{10^{k_d+1}-1}{9}}{a}\Big)\,.
\end{align*}
\end{proposition}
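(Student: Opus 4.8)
The plan is to exploit the additive (partial-sum) structure already exposed in the proof of Proposition \ref{proi}. Combining the law of total probability (\ref{espi}) with (\ref{equi}), for every $m\in\mathbb N^*$ one has $mP_{(d,m)}=\sum_{i=1}^{m}\p(L_m=d\mid D_m=i)$. The key observation I would stress first is that the conditional probability $\p(L_m=d\mid D_m=i)$ does not depend on $m$: conditioning on $D_m=i$ leaves only a uniform draw on $\llbracket1,i\rrbracket$, whose chance of having leading digit $d$ is a function of $i$ and $d$ alone, given explicitly by (\ref{p1i}) and (\ref{p2i}). Writing $g(i)$ for this quantity, I obtain the stable identity $mP_{(d,m)}=\sum_{i=1}^{m}g(i)$ valid for all $m$ (which is exactly the telescoped form of the one-step recurrence of the preceding proposition).

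Second, I would split this sum at the lower boundary of the block containing $n$. In the first case, where $n\in\llbracket d\times10^{k_d},(d+1)\times10^{k_d}-1\rrbracket$, I write
\[
nP_{(d,n)}=\sum_{i=1}^{d\times10^{k_d}-1}g(i)+\sum_{i=d\times10^{k_d}}^{n}g(i).
\]
The first sum equals $(d\times10^{k_d}-1)\,P_{(d,d\times10^{k_d}-1)}$ by applying the stable identity at the smaller argument $d\times10^{k_d}-1$. The second case, $n\in\llbracket(d+1)\times10^{k_d},d\times10^{k_d+1}-1\rrbracket$, is handled identically, splitting instead at $(d+1)\times10^{k_d}-1$.

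Third, I would evaluate $g(i)$ on the residual range to recover the stated summand. For $i\in\llbracket d\times10^{k_d},(d+1)\times10^{k_d}-1\rrbracket$ the leading digit of $i$ is $d$ and $k_i=\min\{k:d\times10^k>i\}=k_d+1$, so $k_i-1=k_d$ and (\ref{p1i}) gives $g(i)=\frac{i-\frac{(9d-1)10^{k_d}-8}{9}}{i}$, precisely the first claimed summand. For $i\in\llbracket(d+1)\times10^{k_d},d\times10^{k_d+1}-1\rrbracket$ the leading digit of $i$ differs from $d$ while again $k_i=k_d+1$, so (\ref{p2i}) gives $g(i)=\frac{1}{i}\cdot\frac{10^{k_d+1}-1}{9}$, matching the second claim.

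The only genuinely delicate point, which I would verify explicitly, is the constancy of the exponent $k_i=k_d+1$ across the whole residual block: this is what lets $g(i)$ keep a single closed form over the summation range, and it is the reason the boundaries are placed exactly at $d\times10^{k_d}$ and $(d+1)\times10^{k_d}$. The inclusion $d\times10^{k_d}\le i<d\times10^{k_d+1}$ yields $k_i=k_d+1$ at once, and the intermediate value $(d+1)\times10^{k_d}$ is precisely where the leading digit of $i$ switches away from $d$, i.e. where $g$ switches between formulas (\ref{p1i}) and (\ref{p2i}); everything else is a direct identification of the two summands with no simplification required.
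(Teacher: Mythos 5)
Your proof is correct and takes essentially the same route as the paper, whose entire proof is the remark that the formulas follow directly from Proposition \ref{proi}: in both cases the point is that $nP_{(d,n)}$ is a partial sum (of the conditional probabilities (\ref{p1i})--(\ref{p2i})) whose initial segment up to the block boundary is exactly $(d\times10^{k_d}-1)P_{(d,d\times10^{k_d}-1)}$, resp. $((d+1)\times10^{k_d}-1)P_{(d,(d+1)\times10^{k_d}-1)}$. Your write-up simply makes explicit (via the $m$-independence of $\p(L_m=d\mid D_m=i)$ and the constancy of $k_i=k_d+1$ on each residual block) what the paper leaves implicit.
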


\begin{proof}
Results are directly derived from Proposition \ref{proi}.
\end{proof}

We now consider the sequence $(\widehat P_{(d,n)})_{n\in\mathbb N^*}$ defined as follows.\\
If $n\in\llbracket d\times10^{k_d},(d+1)\times10^{k_d}-1\rrbracket$:
\begin{align*}
\widehat P_{(d,n)}=d(\alpha_d-1)\frac{10^{k_d}}{n}+1+\frac{9d-1}{9}\frac{10^{k_d}}{n}\big(\ln(\frac{10^{k_d}}{n})+\ln d\big)\,.
\end{align*}
If $n\in\llbracket(d+1)\times10^{k_d},d\times10^{k_d+1}-1\rrbracket$:
\begin{align*}
\widehat P_{(d,n)}=(d+1)\beta_d\frac{10^{k_d}}{n}-\frac{10}{9}\frac{10^{k_d}}{n}\big(\ln(\frac{10^{k_d}}{n})+\ln(d+1)\big)\,.
\end{align*}

We denote by $\gamma_d$ a real number such that $\gamma_d\in]1;1+\frac{1}{d}[$ and $I_{(d,\gamma_d)}$ the set such that 
$I_{(d,\gamma_d)}=\bigcup_{i=1}^{+\infty}\big(\llbracket d\gamma_d\times10^{i},(d+1)\times10^{i}-1\rrbracket\cup\llbracket(d+1)\gamma_d\times10^{i},
d\times10^{i+1}-1\rrbracket\big)$. The below proposition can thereupon be stated:
\begin{proposition}\label{pro3i}
\begin{align*}
P_{(d,n)}\underset{\substack{n \to +\infty \\ n\in I_{(d,\gamma_d)}}}{\sim}\widehat P_{(d,n)}\,.
\end{align*}
\end{proposition}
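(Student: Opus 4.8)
The plan is to treat the two phases separately and, in each, to begin from the exact expression for $P_{(d,n)}$ furnished by Proposition \ref{pro2i}, replace the phase-boundary term by its known limit ($\alpha_d$ or $\beta_d$), and estimate the remaining partial sum by a logarithm via inequality (\ref{inelog}). Fix $d$ and let $n\to+\infty$ with $n\in I_{(d,\gamma_d)}$; then the associated exponent $k_d$ tends to $+\infty$. By the very definition of $I_{(d,\gamma_d)}$, $n$ lies either in $\llbracket d\gamma_d\times10^{k_d},(d+1)\times10^{k_d}-1\rrbracket$ (first phase, leading digit $d$) or in $\llbracket(d+1)\gamma_d\times10^{k_d},d\times10^{k_d+1}-1\rrbracket$ (second phase), in each case bounded away from the start of the phase by the factor $\gamma_d$.

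In the first phase Proposition \ref{pro2i} gives $P_{(d,n)}=\frac1n\big(P_{(d,d\times10^{k_d}-1)}(d\times10^{k_d}-1)+\sum_{b=d\times10^{k_d}}^n\frac{b-\frac{(9d-1)10^{k_d}-8}{9}}{b}\big)$. I would substitute $P_{(d,d\times10^{k_d}-1)}=P_{(d,\phi_d(k_d))}=\alpha_d+o(1)$ from Proposition \ref{sub1i} (whose proof even supplies the rate $O(k_d10^{-k_d})$), split the sum as $(n-d\times10^{k_d}+1)-\frac{(9d-1)10^{k_d}-8}{9}\sum_b\frac1b$, and replace $\sum_{b=d\times10^{k_d}}^n\frac1b$ by $\ln\frac{n}{d\times10^{k_d}}$ up to an error $O(10^{-k_d})$ guaranteed by (\ref{inelog}). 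A direct rearrangement then reproduces exactly the closed form defining $\widehat P_{(d,n)}$ in the first phase, namely $d(\alpha_d-1)\frac{10^{k_d}}{n}+1+\frac{9d-1}{9}\frac{10^{k_d}}{n}(\ln\frac{10^{k_d}}{n}+\ln d)$, up to a remainder collected separately. The second phase is handled identically, using $P_{(d,\psi_d(k_d))}=\beta_d+o(1)$ from Proposition \ref{sub2i} and the constant summand $\frac{10^{k_d+1}-1}{9}$, and yields the corresponding formula for $\widehat P_{(d,n)}$.

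Combining these, the remainder $P_{(d,n)}-\widehat P_{(d,n)}$ is a sum of three contributions: the boundary error $\frac{(d\times10^{k_d}-1)\,o(1)}{n}$, the logarithmic-approximation error $\frac1n\cdot\frac{(9d-1)10^{k_d}}{9}\cdot O(10^{-k_d})$, and lower-order terms of size $O(1/n)$. Since $n\ge d\times10^{k_d}$, all three are $o(1)$, so $P_{(d,n)}-\widehat P_{(d,n)}\to0$ uniformly over each phase; here the restriction to $I_{(d,\gamma_d)}$ is what ensures the partial sums contain $\Theta(10^{k_d})$ terms, so that (\ref{inelog}) is applicable and sharp rather than degenerate near a phase boundary.

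The main obstacle is the final step: upgrading this absolute estimate to the asserted equivalence $P_{(d,n)}\sim\widehat P_{(d,n)}$, which demands $P_{(d,n)}-\widehat P_{(d,n)}=o(\widehat P_{(d,n)})$ and hence a positive lower bound for $\widehat P_{(d,n)}$ on $I_{(d,\gamma_d)}$. I would observe that, within a given phase, $\widehat P_{(d,n)}$ depends only on the single ratio $t=10^{k_d}/n$; writing it as a function of $s=d\,t$ in the first phase (resp. $u=(d+1)t$ in the second), one checks that this function is continuous and strictly positive on the relevant closed ratio-interval, and that the danger zone is the very start of the phase: as $d$ grows the first-phase profile degenerates to $1-s+s\ln s$, which vanishes at $s=1$, i.e. at $n=d\times10^{k_d}$. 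The condition $n\ge d\gamma_d\times10^{k_d}$ with $\gamma_d\in\,]1,1+\tfrac1d[$ forces $s\le1/\gamma_d<1$, keeping $t$ in a closed subinterval on which $\widehat P_{(d,n)}\ge c_d>0$ for a constant $c_d$ independent of $n$, and the second phase is secured the same way by $u\le1/\gamma_d$. Dividing the $o(1)$ remainder by this uniform lower bound gives $\frac{P_{(d,n)}-\widehat P_{(d,n)}}{\widehat P_{(d,n)}}\to0$, which is the claim; pinning down precisely why the cutoff $\gamma_d$ keeps the infimum of $\widehat P_{(d,n)}$ over the truncated phase bounded away from $0$ is the delicate part of the argument.
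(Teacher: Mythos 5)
Your proof is correct and follows essentially the same route as the paper's: both start from the exact expressions of Proposition \ref{pro2i}, replace the phase-boundary term by its limit ($\alpha_d$ from Proposition \ref{sub1i}, resp.\ $\beta_d$ from Proposition \ref{sub2i}), and convert the remaining harmonic sum into a logarithm via (\ref{inelog}) (which the paper packages as Lemmas \ref{lem0i} and \ref{lem0'i}). The one place you go beyond the paper is the final step: the paper passes directly from these substitutions to the stated equivalence, leaving implicit that an additive $o(1)$ error gives a multiplicative equivalence only because $\widehat P_{(d,n)}$ stays bounded away from $0$; making this explicit is a genuine improvement. However, your diagnosis of \emph{why} it stays bounded away from $0$ is off. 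For fixed $d\in\llbracket1,9\rrbracket$ nothing degenerates at the start of the phase: at $n=d\times10^{k_d}$ one has $\widehat P_{(d,n)}=f_d(\frac{1}{d})=\alpha_d>0$, and by Lemma \ref{lem1i} the infimum of $f_d$ over the \emph{whole} interval $[\frac{1}{d+1},\frac{1}{d}]$ equals $m_d>0$ (the smallest case, $m_9$, is about $0.031$); likewise $g_d$ is concave ($g_d''(x)=-\frac{10}{9x}<0$), so its minimum over $[\frac{1}{10d},\frac{1}{d+1}]$ is attained at the endpoints and equals $\min(\alpha_d,\beta_d)>0$. Your limiting profile $1-s+s\ln s$, which vanishes at $s=1$, only arises as $d\to+\infty$, a regime that is irrelevant here since $d\le9$. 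So the cutoff $\gamma_d$ is not what rescues positivity; its role, as the paper uses it, is simply to force $n/(d\times10^{k_d})\ge\gamma_d>1$ (resp.\ $n/((d+1)\times10^{k_d})\ge\gamma_d>1$), i.e.\ to keep $n$ strictly inside each phase so that Lemmas \ref{lem0i} and \ref{lem0'i} apply with a clean $o(1)$ error. This misattribution does not invalidate your argument---the uniform lower bound you invoke certainly holds on the truncated ratio-intervals---so the proof stands as written.
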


\begin{proof}Let us study both cases. In the first one, $n\in\llbracket\gamma_d d\times10^{k_d},(d+1)\times10^{k_d}-1\rrbracket$. Let $I_1$ be the 
interval such that, $I_1=\bigcup_{i=1}^{+\infty}\llbracket d\gamma_d\times10^{i},(d+1)\times10^{i}-1\rrbracket$. We have:
\begin{align}\label{ine1i}
\frac{1}{d+1}\le\frac{10^{k_d}}{n}\le\frac{1}{\gamma_dd}\,.
\end{align}
Before we go any further, let us prove the following lemma:
\begin{lemma}\label{lem0i}
For $n>d\times10^{k_d}$:
\begingroup\small
\begin{align*}
\ln(\frac{n}{10^{k_d}})-\ln d+\ln(1+\frac{1}{n})\le\sum_{b=d\times10^{k_d}}^n\frac{1}{b}\le\ln(\frac{n}{10^{k_d}})-\ln d+
\ln(1+\frac{1}{d\times10^{k_d}-1})\,.
\end{align*}
\endgroup
\end{lemma}
\begin{proof}
This result is directly related to inequalities \ref{inelog}. Indeed for $n>d\times10^{k_d}>1$, we have:
\begin{align*}
\ln(\frac{n+1}{d\times10^{k_d}})\le\sum_{b=d\times10^{k_d}}^n\frac{1}{b}\le\ln(\frac{n}{d\times10^{k_d}-1})\,.
\end{align*}
\end{proof}
Thanks to Proposition \ref{pro2i} we get:
\begingroup\scriptsize
\begin{align*}
P_{(d,n)}&=P_{(d,d\times10^{k_d}-1)}\frac{d\times10^{k_d}-1}{n}+\frac{n-d\times10^{k_d}+1}{n}-\frac{(9d-1)10^{k_d}-8}{9n}
\sum_{b=d\times10^{k_d}}^n\frac{1}{b}\\
&=d(P_{(d,d\times10^{k_d}-1)}-1)\frac{10^{k_d}}{n}+1-\frac{(9d-1)10^{k_d}}{9n}\sum_{b=d\times10^{k_d}}^n\frac{1}{b}\\
&\enspace-\frac{P_{(1,d\times10^{k_d}-1)}}{n}+\frac{1}{n}+\frac{8}{9n}\sum_{b=d\times10^{k_d}}^n\frac{1}{b}\,.
\end{align*}
\endgroup
Thanks to inequalities (\ref{ine1i}) and knowing that $\sum_{b=d\times10^{k_d}}^n\frac{1}{b}=\ln(\frac{n}{10^{k_d}})-\ln d+
\underset{\substack{n \to +\infty \\ n\in I_1}}{o}(1)$ (thanks to Lemma \ref{lem0i}, $\frac{n}{d\times10^{k_d}}$ being greater than or equal to 
$\gamma_d>1$) and that $P_{(d,d\times10^{k_d}-1)}\underset{+\infty}{\sim}\alpha_d$ (see Proposition \ref{sub1i}), we finally have:
\begin{align*}
P_{(d,n)}\underset{\substack{n \to +\infty \\ n\in I_1}}{\sim}d(\alpha_d-1)\frac{10^{k_d}}{n}+1+\frac{9d-1}{9}\frac{10^{k_d}}{n}
\big(\ln(\frac{10^{k_d}}{n})+\ln d\big)\,.
\end{align*}
In the second case $n\in\llbracket(d+1)\gamma_d\times10^{k_d},d\times10^{{k_d}+1}-1\rrbracket$. Let $I_2$ be the interval such that, 
$I_2=\bigcup_{i=1}^{+\infty}\llbracket(d+1)\gamma_d\times10^{i},d\times10^{i+1}-1\rrbracket$. We have:
\begin{align}\label{ine2i}
\frac{1}{10d}\le\frac{10^{k_d}}{n}\le\frac{1}{(d+1)\gamma_d}\,.
\end{align}
Before we go any further, let us prove this additional lemma:
\begin{lemma}\label{lem0'i}
For $n>(d+1)\times10^{k_d}$:
\begingroup\small
\begin{align*}
\ln(\frac{n}{10^{k_d}})-\ln(d+1)+\ln(1+\frac{1}{n})\le\sum_{a=(d+1)\times10^{k_d}}^n\frac{1}{a}\qquad\text{ and,}
\end{align*}
\begin{align*}
\sum_{a=(d+1)\times10^{k_d}}^n\frac{1}{a}\le\ln(\frac{n}{10^{k_d}})-\ln(d+1)
+\ln(1+\frac{1}{(d+1)\times10^{k_d}-1})\,. 
\end{align*}
\endgroup
\end{lemma}
\begin{proof}
This result is directly related to inequalities \ref{inelog}. \\
Indeed for $n>(d+1)\times10^{k_d}>1$, we have:
\begin{align*}
\ln(\frac{n+1}{(d+1)\times10^{k_d}})\le\sum_{a=(d+1)\times10^{k_d}}^n\frac{1}{a}\le\ln(\frac{n}{(d+1)\times10^{k_d}-1})\,.
\end{align*}
\end{proof}
Thanks to Proposition \ref{pro2i} we get:
\begingroup\footnotesize
\begin{align*}
P_{(d,n)}&=P_{(d,(d+1)\times10^{k_d}-1)}\frac{(d+1)\times10^{k_d}-1}{n}+\frac{1}{9}\times\frac{10^{{k_d}+1}-1}{n}\sum_{a=(d+1)\times10^{k_d}}^n
\frac{1}{a}\\
&=(d+1)P_{(d,(d+1)\times10^{k_d}-1)}\frac{10^{k_d}}{n}+\frac{10}{9}\times\frac{10^{k_d}}{n}\sum_{a=(d+1)\times10^{k_d}}^n\frac{1}{a}\\
&\enspace-\frac{P_{(1,(d+1)\times10^{k_d}-1)}}{n}-\frac{1}{9n}\sum_{a=(d+1)\times10^{k_d}}^n\frac{1}{a}\,.
\end{align*}
\endgroup
Thanks the inequalities (\ref{ine2i}) and knowing that $\sum_{a=(d+1)\times10^{k_d}}^n\frac{1}{a}=\ln(\frac{n}{10^{k_d}})-\ln(d+1)+
\underset{\substack{n \to +\infty \\ n\in I_2}}{o}(1)$ (thanks to Lemma \ref{lem0'i}, $\frac{n}{(d+1)\times10^{k_d}}$ being greater than or equal to 
$\gamma_d>1$) and that $P_{(d,(d+1)\times10^{k_d}-1)}\underset{+\infty}{\sim}\beta_d$ (see Proposition \ref{sub2i}), we finally have:
\begin{align*}
P_{(d,n)}\underset{\substack{n \to +\infty \\ n\in I_2}}{\sim}(d+1)\beta_d\frac{10^{k_d}}{n}-\frac{10}{9}\frac{10^{k_d}}{n}\big(\ln(\frac{10^{k_d}}{n})
+\ln(d+1)\big)\,.
\end{align*}
\end{proof}

To find the approximate values of $n$ for which the increasing or decreasing nature of $(P_{(d,n)})_{n\in\mathbb N^*}$ appears, we need to study two 
distinct functions:

\begin{lemma}\label{lem1i}The minimum $m_d$ of the function $f_d$ from $[\frac{1}{d+1},\frac{1}{d}]$ that maps $x$ onto 
$d(\alpha_d-1)x+1+\frac{9d-1}{9}x(\ln(x)+\ln d)$ is reached when:
\begin{align*}
x=\frac{10^{-\frac{10}{9(9d-1)}}\big(1-\frac{1}{d+1}\big)^{-\frac{d+1}{9d-1}}}{d}\,.
\end{align*}
Its value is $m_d=1-\frac{(9d-1)10^{-\frac{10}{9(9d-1)}}\big(1-\frac{1}{d+1}\big)^{-\frac{d+1}{9d-1}}}{9d}$.
\end{lemma}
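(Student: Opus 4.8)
The plan is to treat this as a one–variable calculus optimization. The function $f_d$ is smooth on $(0,+\infty)$, which contains the closed interval $[\frac{1}{d+1},\frac{1}{d}]$, so I would locate the minimizer through the first–order condition, then confirm separately that it is a genuine minimum and that it lies inside the prescribed interval. Since the derivative of $x\ln x$ is $\ln x+1$, differentiating gives
\[
f_d'(x)=d(\alpha_d-1)+\frac{9d-1}{9}\bigl(\ln x+1+\ln d\bigr).
\]
Setting $f_d'(x)=0$ isolates $\ln x+\ln d=\frac{9d(1-\alpha_d)}{9d-1}-1$, hence a single critical point $x^\ast=\frac{1}{d}\exp\bigl(\frac{9d(1-\alpha_d)}{9d-1}-1\bigr)$.

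The key computational step is then to substitute the explicit value of $\alpha_d$ from Proposition \ref{sub1i} into this exponent. Writing $1-\alpha_d$ over the common denominator $81d$, the contribution $81d-9$ in the numerator of $9d(1-\alpha_d)$ cancels exactly against the $9(9d-1)$ produced by the $-1$, so that
\[
\frac{9d(1-\alpha_d)}{9d-1}-1=-\frac{10\ln 10}{9(9d-1)}-\frac{(d+1)\ln(1-\frac{1}{d+1})}{9d-1}.
\]
Exponentiating turns the first term into the factor $10^{-\frac{10}{9(9d-1)}}$ and the logarithmic term into $(1-\frac{1}{d+1})^{-\frac{d+1}{9d-1}}$, reproducing exactly the stated $x$. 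This cancellation is the heart of the argument and the step I expect to be the most delicate to present cleanly; everything else is routine once it is in place.

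To confirm that this critical point is the minimum I would compute $f_d''(x)=\frac{9d-1}{9x}$, which is positive on $(0,+\infty)$ because $9d-1>0$; thus $f_d$ is strictly convex and $x^\ast$ is its unique global minimizer. It then remains to check $x^\ast\in[\frac{1}{d+1},\frac{1}{d}]$. Taking logarithms reduces the bound $x^\ast\le\frac{1}{d}$ to $(d+1)\ln(1+\frac1d)\le\frac{10}{9}\ln 10$, which follows from $\ln(1+\frac1d)<\frac1d$, since then the left side is below $1+\frac1d\le 2<\frac{10}{9}\ln 10$; and it reduces $x^\ast\ge\frac{1}{d+1}$ to $(1+\frac1d)^{9d}\ge 10$, which holds because the elementary inequality $(1+\frac1d)^{d}\ge 2$ gives $(1+\frac1d)^{9d}\ge 2^9=512$.

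Finally, to obtain the value $m_d$ I would evaluate $f_d(x^\ast)$ by reusing the critical–point identity in the form $\frac{9d-1}{9}(\ln x^\ast+\ln d)=d(1-\alpha_d)-\frac{9d-1}{9}$. Inserting this into $f_d(x^\ast)=d(\alpha_d-1)x^\ast+1+\frac{9d-1}{9}x^\ast(\ln x^\ast+\ln d)$, the two $\alpha_d$–proportional terms cancel and one is left with the strikingly simple expression $m_d=1-\frac{9d-1}{9}x^\ast$, into which the closed form of $x^\ast$ substitutes directly to give the claimed value.
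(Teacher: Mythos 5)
Your proof is correct and follows essentially the same route as the paper's: differentiate, solve the first-order condition $f_d'(x)=0$ to obtain the stated critical point (via the same cancellation when substituting $\alpha_d$), and evaluate $f_d$ there using the critical-point identity. You are in fact more thorough than the paper, which omits both the convexity check $f_d''(x)=\frac{9d-1}{9x}>0$ and the verification that the critical point lies in $[\frac{1}{d+1},\frac{1}{d}]$ --- steps that are genuinely needed for the lemma as stated.
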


\begin{proof}
$\forall x\in[\frac{1}{d+1},\frac{1}{d}]$, $f_d'(x)=d(\alpha_d-1)+\frac{9d-1}{9}(1+\ln(x)+\ln d)$. By solving the equation 
$d(\alpha_d-1)+\frac{9d-1}{9}(1+\ln(x)+\ln d)=0$, we have:
\begin{align*}
x=\frac{10^{-\frac{10}{9(9d-1)}}\big(1-\frac{1}{d+1}\big)^{-\frac{d+1}{9d-1}}}{d}\,.
\end{align*}
Finally $f_d(\frac{10^{-\frac{10}{9(9d-1)}}\big(1-\frac{1}{d+1}\big)^{-\frac{d+1}{9d-1}}}{d})=
1-\frac{(9d-1)10^{-\frac{10}{9(9d-1)}}\big(1-\frac{1}{d+1}\big)^{-\frac{d+1}{9d-1}}}{9d}$.
\end{proof}

\begin{remark}
We note that: 
\begin{align*}
f_d(\frac{1}{d})&=\alpha_d-1+1=\alpha_d\qquad\text{and}\\
f_d(\frac{1}{d+1})&=(\alpha_d-1)\times\frac{d}{d+1}+1+\frac{9d-1}{9}\times\frac{1}{d+1}\ln(\frac{d}{d+1})\\
&=\frac{9+10\ln10+9(d+1)\ln(1-\frac{1}{d+1})-81d}{81(d+1)}+1+\frac{9d-1}{9(d+1)}\ln(\frac{d}{d+1})\\
&=\frac{10\big(9+\ln10+9d\ln(1-\frac{1}{d+1})\big)}{81(d+1)}=\beta_d\,.
\end{align*}
\end{remark}

Using the approximation of Proposition \ref{pro3i}, the definition of 
$(\widehat P_{(d,n)})_{n\in\mathbb N^*}$ (for $n\in\llbracket d\times10^{k_d},(d+1)\times10^{k_d}-1\rrbracket$) and the properties of $f_d$ 
(Lemma \ref{lem1i}), we can approximate the ranks of local minima. Let $i$ be a strictly positive integer. Let us denote by $m_{(d,i)}$ the rank of the 
local minimum of $P_{(d,n)}$ in $\llbracket d\times10^i,(d+1)\times10^i-1\rrbracket$ and by $\widehat m_{(d,i)}$ the estimate of this rank. Let us 
gather in the following table the first values of those ranks and values of local minima, for $d=1$.

\begin{table}[ht]
\centering
\begin{tabular}{|c||c|c|c|}
\hline
\rowcolor{gray!40}$i$&$\widehat m_{(1,i)}$&$m_{(1,i)}$&$P_{(1,m_{(1,i)})}$\\\hline
$1$& $12$ & $11$ & $0.300$   \\\hline
$2$& $116$ & $116$ & $0.242$   \\\hline
$3$& $1158$ & $1158$ & $0.234$   \\\hline
$4$& $11578$ & $11579$ & $0.232$   \\\hline
\end{tabular}
\caption{First four values of above defined ranks and associated values of local minima of $P_{(1,n)}$. We round off values of $P_{(1,n)}$ to three 
significant digits and values of $\widehat m_{(1,i)}$ to unity. Note that $m_1\approx0.232$.}
\label{tab3}
\end{table}

Indeed, for $n\in\llbracket10,19\rrbracket$, our approximation of the rank $m_{(1,1)}$ for which the minimum is reached verifies: 
$\frac{10^1}{\widehat m_{(1,1)}}=2^{\frac{1}{9}}\times0.2^{\frac{5}{36}}$, \textit{i.e.} $\widehat m_{(1,1)}\approx12$.

Both values of minima (approximately $0.232$ according to Lemma \ref{lem1i}) and ranks for which these minima are reached are correctly approximated by 
our results as illustrated in Table \ref{tab3}, for $d=1$. 

Let us similarly gather in the below table the first values of those ranks and values of local minima, for $d\in\llbracket2,9\rrbracket$.

\begin{table}[ht]
\centering
\begin{tabular}{|c||c|c|c|c|}
\hline
\rowcolor{gray!40}$d$&$\widehat m_{(d,4)}$&$m_{(d,4)}$&$m_d$&$P_{(d,m_{(d,4)})}$\\\hline
$2$& $21643$ & $21642$ & $0.127$ & $0.127$ \\\hline
$3$& $31669$ & $31668$ & $0.088$ & $0.088$ \\\hline
$4$& $41683$ & $41681$ & $0.067$ & $0.067$ \\\hline
$5$& $51692$ & $51690$ & $0.054$ & $0.054$ \\\hline
$6$& $61698$ & $61696$ & $0.046$ & $0.046$ \\\hline
$7$& $71703$ & $71701$ & $0.039$ & $0.039$ \\\hline
$8$& $81706$ & $81704$ & $0.034$ & $0.034$ \\\hline
$9$& $91709$ & $91707$ & $0.031$ & $0.031$ \\\hline
\end{tabular}
\caption{Values of above defined ranks and associated values of local minima of $P_{(d,n)}$, for $n\in\llbracket d\times10^{4},(d+1)
\times10^{4}-1\rrbracket$. We round off values of $P_{(d,n)}$ and $m_d$ to three significant digits and values of 
$\widehat m_{(d,4)}$ to unity.}
\label{tab7}
\end{table}

Indeed, for $n\in\llbracket20000,29999\rrbracket$, our approximation of the rank $m_{(2,4)}$ for which the minimum is reached verifies: 
$\frac{10^4}{\widehat m_{(2,4)}}=\frac{10^{-\frac{10}{9(9\times2-1)}}\big(1-\frac{1}{2+1}\big)^{-\frac{2+1}{9\times2-1}}}{2}
=\frac{10^{-\frac{10}{153}}(\frac{2}{3})^{-\frac{3}{17}}}{2}$, \textit{i.e.} $\widehat m_{(2,4)}\approx21643$.

Both values of minima and ranks for which these minima are reached are correctly approximated by our results as illustrated in Table \ref{tab7}. 

The second function we need to study is defined below:

\begin{lemma}\label{lem2i}The maximum $M_d$ of the function $g_d$ from $[\frac{1}{10d},\frac{1}{d+1}]$ that maps $x$ onto 
$(d+1)\beta_d x-\frac{10}{9}x\big(\ln(x)+\ln(d+1)\big)$ is reached when $x=\frac{10^{\frac{1}{9}}\big(1-\frac{1}{d+1}\big)^d}{d+1}$. Its value is 
$M_d=\frac{10^{\frac{10}{9}}\big(1-\frac{1}{d+1}\big)^d}{9(d+1)}$.
\end{lemma}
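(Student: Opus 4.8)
The plan is to follow the proof of Lemma \ref{lem1i} essentially verbatim, since $g_d$ has the same structure as $f_d$: a linear term plus a constant multiple of $x\ln(x)$. First I would differentiate, obtaining
\begin{align*}
g_d'(x)=(d+1)\beta_d-\frac{10}{9}\big(1+\ln(x)+\ln(d+1)\big),
\end{align*}
and solve $g_d'(x)=0$. This gives $\ln(x)+\ln(d+1)=\frac{9(d+1)\beta_d}{10}-1$, and substituting the explicit value $\beta_d=\frac{10\big(9+\ln10+9d\ln(1-\frac{1}{d+1})\big)}{81(d+1)}$ collapses the right-hand side to $\frac{\ln10}{9}+d\ln(1-\frac{1}{d+1})$. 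Exponentiating then yields precisely the announced critical point $x=\frac{10^{1/9}(1-\frac{1}{d+1})^d}{d+1}$.

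Next I would confirm that this critical point is the global maximum on $[\frac{1}{10d},\frac{1}{d+1}]$. Since $g_d''(x)=-\frac{10}{9x}<0$ for all $x>0$, the function $g_d$ is strictly concave, so its unique critical point is a maximum; it remains to check that this point lies in the interval, which follows from $g_d'$ being decreasing together with a sign check at the two endpoints $\frac{1}{10d}$ and $\frac{1}{d+1}$ (equivalently, from verifying the inequalities $\frac{1}{10d}\le\frac{10^{1/9}(1-\frac{1}{d+1})^d}{d+1}\le\frac{1}{d+1}$ directly, which reduce to elementary estimates on $\big(\frac{d}{d+1}\big)^d$).

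Finally I would evaluate $g_d$ at the critical point $x^*$. The clean trick, exactly as in Lemma \ref{lem1i}, is to avoid re-substituting $\beta_d$ at the end: at $x^*$ one has $\ln(x^*)+\ln(d+1)=\frac{9(d+1)\beta_d}{10}-1$, so
\begin{align*}
g_d(x^*)=x^*\Big((d+1)\beta_d-\frac{10}{9}\big(\tfrac{9(d+1)\beta_d}{10}-1\big)\Big)=\frac{10}{9}\,x^*,
\end{align*}
the $\beta_d$ terms cancelling, whence $M_d=\frac{10}{9}\cdot\frac{10^{1/9}(1-\frac{1}{d+1})^d}{d+1}=\frac{10^{10/9}(1-\frac{1}{d+1})^d}{9(d+1)}$. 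I do not anticipate any genuine obstacle: the only mildly delicate step is the bookkeeping in simplifying the right-hand side of $g_d'(x)=0$ via the closed form of $\beta_d$, and the endpoint sign check confirming that the critical point is interior to $[\frac{1}{10d},\frac{1}{d+1}]$.
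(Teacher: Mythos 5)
Your proposal is correct and follows essentially the same route as the paper: differentiate $g_d$, solve the first-order condition $(d+1)\beta_d-\frac{10}{9}(1+\ln(x)+\ln(d+1))=0$ using the closed form of $\beta_d$ to obtain the critical point, and evaluate $g_d$ there. The only differences are cosmetic additions of rigor on your part (the concavity check $g_d''<0$ and the verification that the critical point lies inside $[\frac{1}{10d},\frac{1}{d+1}]$, both of which the paper silently omits), plus the cancellation trick for computing $M_d=\frac{10}{9}x^*$, which the paper states without detail.
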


\begin{proof}
$\forall x\in[\frac{1}{10d},\frac{1}{d+1}]$, $g_d'(x)=(d+1)\beta_d-\frac{10}{9}(1+\ln(x)+\ln(d+1))$. By solving the equation 
$(d+1)\beta_d-\frac{10}{9}(1+\ln(x)+\ln(d+1))=0$, we obtain $x=\frac{10^{\frac{1}{9}}\big(1-\frac{1}{d+1}\big)^d}{d+1}$.\\
Finally $g_d(\frac{10^{\frac{1}{9}}\big(1-\frac{1}{d+1}\big)^d}{d+1})=\frac{10^{\frac{10}{9}}\big(1-\frac{1}{d+1}\big)^d}{9(d+1)}$.
\end{proof}

\begin{remark}
We note that: 
\begin{align*}
g_d(\frac{1}{d+1})&=\beta_d\qquad\text{and}\\
g_d(\frac{1}{10d})&=(d+1)\beta_d\times\frac{1}{10d}-\frac{10}{9}\times\frac{1}{10d}\big(\ln(\frac{1}{10d})+\ln(d+1)\big)\\
&=\frac{9+\ln10+9d\ln(1-\frac{1}{d+1})}{81d}-\frac{1}{9d}\big(\ln(\frac{1}{10d})+\ln(d+1)\big)\\
&=\frac{9+10\ln10+9(d+1)\ln(1-\frac{1}{d+1})}{81d}=\alpha_d\,.
\end{align*}
\end{remark}

Using the approximation of Proposition \ref{pro3i}, the definition of $(\widehat P_{(d,n)})_{n\in\mathbb N^*}$ (for 
$n\in\llbracket (d+1)\times10^{k_i},d\times10^{k_d+1}-1\rrbracket$) and the properties of $g_d$ (Lemma \ref{lem2i}), we can approximate the ranks of 
local maxima. Let $i$ be a strictly positive integer. Let us denote by $M_{(d,i)}$ the rank of the local maximum of $P_{(d,n)}$ in $\llbracket 
(d+1)\times10^i,(d+1)\times10^{i+1}-1\rrbracket$ and by $\widehat M_{(d,i)}$ the estimate of this rank. Let us gather in the following table the first 
values of those ranks and values of local maxima, for $d=1$.

\begin{table}[ht]
\centering
\begin{tabular}{|c||c|c|c|}
\hline
\rowcolor{gray!40}$i$&$\widehat M_{(1,i)}$&$M_{(1,i)}$&$P_{(1,M_{(1,i)})}$\\\hline
$1$& $31$ & $27$ & $0.402$   \\\hline
$2$& $310$ & $304$ & $0.364$   \\\hline
$3$& $3097$ & $3090$ & $0.359$   \\\hline
$4$& $30971$ & $30963$ & $0.359$   \\\hline
\end{tabular}
\caption{First four values of above defined ranks and associated values of local maxima of $P_{(1,n)}$. We round off values of $P_{(1,n)}$ to three 
significant digits and values of $\widehat M_{(1,i)}$ to unity. Note that $M_1\approx0.359$.}
\label{tab4}
\end{table}

Indeed, for $n\in\llbracket20,99\rrbracket$, our approximation of the rank $M_{(1,1)}$ for which the maximum is reached verifies: 
$\frac{10^1}{n_M}=\frac{5^{\frac{1}{9}}\times0.5^{\frac{8}{9}}}{2}$, \textit{i.e.} $\widehat M_{(1,1)}\approx31$.

Both values of maxima (approximately $0.359$ according to Lemma \ref{lem2i}) and ranks for which these maxima are reached are correctly approximated by 
our results as illustrated in Table \ref{tab4}, for $d=1$.

Let us gather in the following table the first values of those ranks and values of local maxima, for $d\in\llbracket2,9\rrbracket$.

\begin{table}[ht]
\centering
\begin{tabular}{|c||c|c|c|c|}
\hline
\rowcolor{gray!40}$d$&$\widehat M_{(d,4)}$&$M_{(d,4)}$&$M_d$&$P_{(d,M_{(d,4)})}$\\\hline
$2$& $52263$ & $52258$ & $0.213$ & $0.213$ \\\hline
$3$& $73412$ & $73409$ & $0.151$ & $0.151$ \\\hline
$4$& $94515$ & $94515$ & $0.118$ & $0.118$ \\\hline
$5$& $115597$ & $115600$ & $0.096$ & $0.096$ \\\hline
$6$& $136668$ & $136673$ & $0.081$ & $0.081$ \\\hline
$7$& $157733$ & $157741$ & $0.070$ & $0.070$ \\\hline
$8$& $178793$ & $178803$ & $0.062$ & $0.062$ \\\hline
$9$& $199851$ & $199863$ & $0.056$ & $0.056$ \\\hline
\end{tabular}
\caption{Values of above defined ranks and associated values of local maxima of $P_{(d,n)}$, for $n\in\llbracket (d+1)\times10^{4},d
\times10^{5}-1\rrbracket$. We round off values of $P_{(d,n)}$ and $M_d$ to three significant digits and values of $\widehat M_{(d,4)}$ to unity.}
\label{tab8}
\end{table}

Indeed, for $n\in\llbracket30000,199999\rrbracket$, our approximation of the rank $M_{(2,4)}$ for which the maximum is reached verifies: 
$\frac{10^4}{\widehat M_{(2,4)}}=\frac{10^{\frac{1}{9}}\big(1-\frac{1}{2+1}\big)^2}{2+1}=\frac{4\times10^{\frac{1}{9}}}{27}$, \textit{i.e.} 
$\widehat M_{(2,4)}\approx52263$.

Both values of maxima and ranks for which these maxima are reached are correctly approximated by our results as illustrated in Table \ref{tab8}.

When considering the random experiment defined in the beginning of the article, we have thus determined the values of the 
proportions of selected numbers whose leading digit is $d$ and its bounds: these values seem to fluctuate between $m_d$ and 
$M_d$. 

\section{Central values}

From previous Figures, we notice that there exist fluctuations in the graph of $(P_{(d,n)})_{n\in\mathbb N^*}$. We can 
calculate over each "pseudo-cycle", \textit{i.e.} for all $n\in\llbracket d\times10^{i},d\times10^{i+1}-1\rrbracket$ where $i\in\mathbb N$, the mean 
value $C_{(d,i)}$ of $P_{(d,n)}$. For example, we obtain:
\begin{examples}
\begin{align*}
C_{(2,0)}&=\frac{1}{18}\sum_{i=2}^{19}P_{(2,i)}\approx0.197\\
C_{(5,1)}&=\frac{1}{450}\sum_{i=50}^{499}P_{(5,i)}\approx0.074\\
C_{(9,2)}&=\frac{1}{8100}\sum_{i=900}^{8999}P_{(9,i)}\approx0.043\,.
\end{align*}
\end{examples}

We will now consider the sequence $(C_{(d,n)})_{n\in\mathbb N}$ and will demonstrate that it converges. Before we go any further, let us prove the 
following lemma:
\begin{lemma}\label{lem3}
For all $(p,q)\in\mathbb N^2$, such that $4<p<q$, we have:
\begin{align*}
\frac{\ln\big(q(p-1)\big)\ln(\frac{q}{p-1})}{2}\le\sum_{n=p}^qn\ln n\le\frac{\ln\big((q+1)p\big)\ln(\frac{q+1}{p})}{2}\,.
\end{align*}
\end{lemma}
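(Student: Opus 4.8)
The plan is to sandwich the partial sum between two integrals, exactly as inequality (\ref{inelog}) does for the harmonic summand, and then to evaluate those integrals in closed form. Because $4<p<q$, every index in the summation range exceeds $e$, so the summand is monotone throughout $[p,q]$; this monotonicity is what justifies comparing each individual term with the integral of the continuous extension over an adjacent unit interval. The antiderivative to keep in view is $\int\frac{\ln x}{x}\,dx=\frac{(\ln x)^2}{2}$, since its values at the relevant endpoints are precisely the two compact expressions in the statement: evaluated over $[p-1,q]$ it gives $\frac{(\ln q)^2-(\ln(p-1))^2}{2}$, and over $[p,q+1]$ it gives $\frac{(\ln(q+1))^2-(\ln p)^2}{2}$.

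First I would record the elementary factorization $(\ln a)^2-(\ln b)^2=\ln(ab)\,\ln(\frac{a}{b})$, which is exactly what converts the difference of squared logarithms produced by the antiderivative into the products $\ln(q(p-1))\ln(\frac{q}{p-1})$ and $\ln((q+1)p)\ln(\frac{q+1}{p})$ that appear in the statement. Next I would carry out the term-by-term integral comparison: using monotonicity I bound each summand between the integrals of the continuous extension over the two unit intervals flanking its index, and then I telescope these unit integrals into a single integral running over the shifted range $[p-1,q]$ on one side and $[p,q+1]$ on the other. This is the direct analog of the chain that produces (\ref{inelog}), with the endpoints $\frac{q}{p-1}$ and $\frac{q+1}{p}$ playing the same bookkeeping role as there; the closed forms in the lemma are then read off from the antiderivative evaluated at those shifted endpoints.

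The step that needs the most care, and the natural place for an orientation or sign slip, is the endpoint bookkeeping: deciding, according to the sign of the derivative of the summand, which of the two shifted integrals is attached to which side of the inequality, so that the displayed orientation is obtained (the $\frac{\ln(q(p-1))\ln(\frac{q}{p-1})}{2}$ term as lower bound and the $\frac{\ln((q+1)p)\ln(\frac{q+1}{p})}{2}$ term as upper bound), and making sure the boundary contributions at $n=p$ and $n=q$ telescope cleanly rather than leaving leftover tails. The hypothesis $4<p$ is used precisely here: it forces strict monotonicity on the entire range, so no exceptional small index can break the per-term comparison, and it keeps $p-1>1$, which guarantees that every logarithm appearing is positive and that the difference-of-squares factorization is meaningful. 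Once the two integrals have been identified with the stated closed forms through that factorization, the lemma follows at once.
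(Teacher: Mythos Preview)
Your approach—integral comparison followed by the difference-of-squares identity $(\ln a)^2-(\ln b)^2=\ln(ab)\ln(a/b)$—is exactly the paper's. But there is a genuine orientation error, and it is not just bookkeeping.

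You correctly identify the antiderivative $\tfrac{(\ln x)^2}{2}$, which means the intended summand is $\frac{\ln n}{n}$ (the printed $n\ln n$ is a misprint; the lemma is invoked in the proof of Proposition~\ref{proci} precisely to bound $\sum_i\frac{\ln i}{i}$). However, $x\mapsto\frac{\ln x}{x}$ is \emph{decreasing} on $(e,\infty)$, so for $p>4$ the per-term comparison yields
\[
\int_{p}^{q+1}\frac{\ln x}{x}\,dx \;\le\; \sum_{n=p}^{q}\frac{\ln n}{n} \;\le\; \int_{p-1}^{q}\frac{\ln x}{x}\,dx,
\]
and after the factorization the quantity $\tfrac12\ln\big((q+1)p\big)\ln\big(\tfrac{q+1}{p}\big)$ is the \emph{lower} bound while $\tfrac12\ln\big(q(p-1)\big)\ln\big(\tfrac{q}{p-1}\big)$ is the \emph{upper} one—exactly the reverse of the display. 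A numerical check at $p=5$, $q=6$ confirms this: the sum is $\approx 0.621$, the displayed ``lower bound'' is $\approx 0.644$, and the displayed ``upper bound'' is $\approx 0.598$.

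The paper's own proof carries the same slip (it calls the function increasing while writing the antiderivative of $\frac{\ln x}{x}$). In the subsequent application this is harmless, because both integrals have the same leading term $n\ln 10\cdot\ln\tfrac{d+1}{d}$ and the squeeze in Proposition~\ref{proci} is indifferent to which side is which. Still, your proposal explicitly asserts that the displayed orientation is what the comparison produces, and that assertion is false; the correct inequality is the one with the two sides interchanged.
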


\begin{proof}
The function from $[3;+\infty[$ to $\mathbb R$ that maps $x$ onto $x\ln x$ is increasing on $[3;+\infty[$. Thus:
\begin{align*}
\displaystyle \int_{p-1}^{q} x\ln x \, \mathrm{d}x&\le\sum_{n=p}^qn\ln n\le\displaystyle \int_{p}^{q+1} x\ln x \, \mathrm{d}x\\
\left [  \frac{(\ln x)^2}{2}\right]^q_{p-1}&\le\sum_{n=p}^qn\ln n\le\left [  \frac{(\ln x)^2}{2}\right]_{p}^{q+1}\,.
\end{align*}
The result follows.
\end{proof}

The below proposition can thereupon be stated:
\begin{proposition}\label{proci}
\begingroup\scriptsize
\begin{align*}
C_{(d,n)}\underset{+\infty}{\sim}\frac{(18d(\alpha_d-1)-(9d-1)\ln(\frac{d+1}{d}))\ln(\frac{d+1}{d})+18+2(9(d+1)\beta_d+5\ln(\frac{10d}{d+1}))
\ln(\frac{10d}{d+1})}{162d}\,.
\end{align*}
\endgroup
\end{proposition}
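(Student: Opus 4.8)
The plan is to average, over one full pseudo-cycle, the asymptotic description of $P_{(d,j)}$ furnished by Proposition \ref{pro3i}. First I would record that the pseudo-cycle $\llbracket d\times10^{n},d\times10^{n+1}-1\rrbracket$ contains exactly $9d\times10^{n}$ integers and that on it the index $k_d$ equals $n$, so that
\[
C_{(d,n)}=\frac{1}{9d\times10^{n}}\Big(\sum_{j=d\times10^{n}}^{(d+1)\times10^{n}-1}P_{(d,j)}+\sum_{j=(d+1)\times10^{n}}^{d\times10^{n+1}-1}P_{(d,j)}\Big),
\]
the first sum running over the integers of leading digit $d$, where $\widehat P_{(d,j)}=f_d(10^{n}/j)$, and the second over the remaining integers of the cycle, where $\widehat P_{(d,j)}=g_d(10^{n}/j)$, with $f_d,g_d$ as in Lemmas \ref{lem1i} and \ref{lem2i}.

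Next I would replace each $P_{(d,j)}$ by $\widehat P_{(d,j)}$. Since $\widehat P_{(d,n)}$ is uniformly bounded on the cycle and $P_{(d,j)}\in[0,1]$, the integers lying in the two "delay" sub-intervals excluded from $I_{(d,\gamma_d)}$, namely $\llbracket d\times10^{n},d\gamma_d\times10^{n}\rrbracket$ and $\llbracket(d+1)\times10^{n},(d+1)\gamma_d\times10^{n}\rrbracket$, together contribute at most $O(\gamma_d-1)$ to $C_{(d,n)}$, uniformly in $n$, while on $I_{(d,\gamma_d)}$ Proposition \ref{pro3i} gives $P_{(d,j)}=\widehat P_{(d,j)}(1+o(1))$ uniformly over the $j$ of the cycle as $n\to+\infty$. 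Letting $n\to+\infty$ for fixed $\gamma_d$ and then $\gamma_d\to1^{+}$ shows that $C_{(d,n)}$ and $\widehat C_{(d,n)}:=\frac{1}{9d\times10^{n}}\sum_{j}\widehat P_{(d,j)}$ share the same limit. This iterated-limit control of the delay regions is the one genuinely delicate point, and I expect it to be the main obstacle.

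It then remains to evaluate $\widehat C_{(d,n)}$ explicitly by grouping the three types of term in $\widehat P$. The constant $1$, summed over the $10^{n}$ integers of leading digit $d$, yields $\tfrac1{9d}$; the terms $\tfrac{10^{n}}{j}$ yield, through inequalities (\ref{inelog}), the factors $\ln\tfrac{d+1}{d}$ and $\ln\tfrac{10d}{d+1}$ weighted by $d(\alpha_d-1)$ and $(d+1)\beta_d$ respectively. The delicate terms $\tfrac{10^{n}}{j}\ln\tfrac{10^{n}}{j}$ I would split as $n\ln10\,\tfrac{10^{n}}{j}-\tfrac{10^{n}}{j}\ln j$, treat $\sum_j\tfrac{\ln j}{j}$ by Lemma \ref{lem3}, and then exploit the key cancellation: the $n\ln10$ pieces generated here exactly annihilate the $n\ln10$ pieces arising from the $\tfrac{10^{n}}{j}$ factors, so that only the squared logarithms $-\tfrac{9d-1}{18}(\ln\tfrac{d+1}{d})^{2}$ and $\tfrac{5}{9}(\ln\tfrac{10d}{d+1})^{2}$ survive after division by $9d$.

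Finally I would collect the four surviving contributions, obtaining $\tfrac1{9d}$ times $\big[d(\alpha_d-1)\ln\tfrac{d+1}{d}+1-\tfrac{9d-1}{18}(\ln\tfrac{d+1}{d})^{2}+(d+1)\beta_d\ln\tfrac{10d}{d+1}+\tfrac59(\ln\tfrac{10d}{d+1})^{2}\big]$, and place them over the common denominator $162d$ to recover the stated expression. As a cross-check this equals $\tfrac1{9d}\big(\int_{1/(d+1)}^{1/d}\frac{f_d(x)}{x^{2}}\,\mathrm{d}x+\int_{1/(10d)}^{1/(d+1)}\frac{g_d(x)}{x^{2}}\,\mathrm{d}x\big)$, the Riemann-integral form of $\widehat C_{(d,n)}$ obtained by the substitution $x=10^{n}/j$. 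Once the logarithmic cancellation is noticed the computation is routine, so the weight of the argument rests entirely on the uniform error control of the second paragraph.
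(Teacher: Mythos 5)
Your proposal is correct and takes essentially the same route as the paper's own proof: the same split of the pseudo-cycle into the two digit regions, the same replacement of $P_{(d,j)}$ by $\widehat P_{(d,j)}$ with the delay intervals controlled by a small-parameter argument (the paper's $\mathcal E$ playing the role of your $\gamma_d\to1^{+}$), and the same evaluation of the averaged $\widehat P$ via the harmonic-sum inequalities and Lemma \ref{lem3}, hinging on the cancellation of the $n\ln 10$ terms. Your Riemann-integral cross-check is a pleasant addition, but the substance of the argument coincides with the paper's.
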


\begin{proof}Let $\mathcal E$ be a real number such that $0<\mathcal E<1$. 

For all $n\in\mathbb N^*$, we have:
\begingroup\footnotesize
\begin{align*}
C_{(d,n)}&=\frac{1}{9d\times10^n}\sum_{i=d\times10^n}^{d\times10^{n+1}-1}P_{(d,i)}\\
&=\frac{1}{9d\times10^n}\Big(\sum_{i=d\times10^n}^{\lfloor (d+\mathcal E)\times10^n\rfloor-1}P_{(d,i)}+\sum_{i=\lfloor (d+\mathcal E)\times10^n\rfloor}
^{(d+1)\times10^n-1}P_{(d,i)}\\
&\enspace+\sum_{i=(d+1)\times10^n}^{\lfloor((d+1)+\mathcal E)\times10^n\rfloor-1}P_{(d,i)}+
\sum_{i=\lfloor((d+1)+\mathcal E)\times10^n\rfloor}^{d\times10^{n+1}-1}P_{(d,i)}\Big)\,.
\end{align*}
\endgroup

For all $n\in\mathbb N^*$, let us consider $\widehat C_{(d,n)}=\frac{1}{9d\times10^n}\sum_{i=d\times10^n}^{d\times10^{n+1}-1}\widehat P_{(d,i)}$. 
We know that:
\begin{align*}
\forall i\in\mathbb N^*,\quad |P_{(d,i)}-\widehat P_{(d,i)}|\le|P_{(d,i)}|+|\widehat P_{(d,i)}|\le2\,.
\end{align*}
There also exists an integer $t$ such that for all $i\ge t$ and $i\in I_{\mathcal{E}}$, $|P_{(d,i)}-\widehat P_{(d,i)}|\le\mathcal E$ (see 
Proposition \ref{pro3i}).

Thus, for all $n\in\mathbb N^*$ such that $10^n\ge t$, we have:
\begingroup\scriptsize
\begin{align*}
|C_{(d,n)}-\widehat C_{(d,n)}|&\le\frac{1}{9d\times10^n}\sum_{i=d\times10^n}^{d\times10^{n+1}-1}|P_{(d,i)}-\widehat P_{(d,i)}|\\
&\le\frac{1}{9d\times10^n}\Big(\sum_{i=d\times10^n}^{\lfloor (d+\mathcal E)\times10^n\rfloor-1}|P_{(d,i)}-\widehat P_{(d,i)}|+
\sum_{i=\lfloor (d+\mathcal E)\times10^n\rfloor}^{(d+1)\times10^n-1}|P_{(d,i)}-\widehat P_{(d,i)}|\\
&\enspace+\sum_{i=(d+1)\times10^n}^{\lfloor((d+1)+\mathcal E)\times10^n\rfloor-1}|P_{(d,i)}-\widehat P_{(d,i)}|
+\sum_{i=\lfloor((d+1)+\mathcal E)\times10^n\rfloor}^{d\times10^{n+1}-1}|P_{(d,i)}-\widehat P_{(d,i)}|\Big)\\
&\le\frac{1}{9d\times10^n}\Big(2(\lfloor (d+\mathcal E)\times10^n\rfloor-d\times10^n)+((d+1)\times10^n-\lfloor (d+\mathcal E)\times10^n\rfloor)
\mathcal E\\
&\enspace+2(\lfloor((d+1)+\mathcal E)\times10^n\rfloor-(d+1)\times10^n)\\
&\enspace+(d\times10^{n+1}-\lfloor((d+1)+\mathcal E)\times10^n\rfloor)\mathcal E\Big)\\
&\le\frac{1}{9d\times10^n}\Big(2\mathcal E\times 10^n +10^n\mathcal E +2\mathcal E\times10^n+8\times10^n\mathcal E\Big)\le\frac{(5+(9d-1))\mathcal E}{9}\,.
\end{align*}
\endgroup
Consequently $C_{(d,n)}\underset{+\infty}{\sim}\widehat C_{(d,n)}$. We will henceforth study $(\widehat C_{(d,n)})_{n\in\mathbb N}$. We have for all 
$n\in\mathbb N^*$:
\begin{align}\label{equ1i}
\widehat C_{(d,n)}&=\frac{1}{9d}\Big(\sum_{i=d\times10^n}^{(d+1)\times10^n-1}\frac{\widehat P_{(d,i)}}{10^n}+\sum_{i=(d+1)\times10^n}^{d\times10^{n+1}-1}
\frac{\widehat P_{(d,i)}}{10^n}\Big)\,.
\end{align}
We consider the first term of this sum:
\begingroup\small
\begin{align*}
\frac{1}{9d}\sum_{i=d\times10^n}^{(d+1)\times10^n-1}\frac{\widehat P_{(d,i)}}{10^n}&=\frac{1}{9d}\sum_{i=d\times10^n}^{(d+1)\times10^n-1}
\Big(\frac{d(\alpha_d-1)}{i}+\frac{1}{10^n}+\frac{9d-1}{9d}(\ln(\frac{10^n}{i})+\ln d)\Big)\\
&=\frac{9d(\alpha_d-1)+(9d-1)n\ln(10)+(9d-1)\ln d}{81d}\sum_{i=d\times10^n}^{(d+1)\times10^n-1}\frac{1}{i}\\
&\enspace-\frac{9d-1}{81d}\sum_{i=d\times10^n}^{(d+1)\times10^n-1}\frac{\ln i}{i}+\frac{1}{9d}\,.
\end{align*}
\endgroup
The proofs of Lemma \ref{lem0i} and Proposition \ref{sub1i} allow us to state:
\begin{align*}
\ln(\frac{(d+1)\times10^n}{d\times10^n})&\le\sum_{i=d\times10^n}^{(d+1)\times10^n-1}\frac{1}{i}\le\ln(\frac{(d+1)\times10^n-1}{d\times10^n-1})\\
\ln(\frac{d+1}{d}) &\le\sum_{i=d\times10^n}^{(d+1)\times10^n-1}\frac{1}{i}\le\ln(\frac{d+1}{d})+\ln(1+\frac{\frac{1}{d(d+1)}}{10^n-\frac{1}{d}})\,,
\end{align*}
\textit{i.e.} $\sum_{i=d\times10^n}^{(d+1)\times10^n-1}\frac{1}{i}=\ln(\frac{d+1}{d})+\underset{+\infty}{O}(\frac{1}{10^n})$. Then thanks to Lemma 
\ref{lem3}:
\begingroup\footnotesize
\begin{align*}
\frac{\ln\big(((d+1)\times10^n-1)(d\times10^n-1)\big)\ln(\frac{(d+1)\times10^n-1}{d\times10^n-1})}{2}&\le\sum_{i=d\times10^n}^{(d+1)\times10^n-1}
\frac{\ln i}{i}\\
\frac{\ln(d(d+1)\times10^{2n}-(2d+1)\times10^n+1)(\ln(\frac{d+1}{d})+\ln(1+\frac{\frac{1}{d(d+1)}}{10^n-\frac{1}{d}}))}{2}&\le
\sum_{i=d\times10^n}^{(d+1)\times10^n-1}\frac{\ln i}{i}\,.
\end{align*}
\endgroup
We have:
\begingroup\footnotesize
\begin{align*}
\ln(d(d+1)\times10^{2n}-(2d+1)\times10^n+1)&=\ln(d(d+1)\times10^{2n})+\ln(1-\frac{(2d+1)-\frac{1}{10^n}}{d(d+1)\times10^n})\\
&=\ln(d(d+1))+2n\ln(10)+\underset{+\infty}{O}(\frac{2d+1}{d(d+1)\times10^n})\,.
\end{align*}
\endgroup
We obtain:
\begingroup\footnotesize
\begin{align*}
\frac{\big(\ln(d(d+1))+2n\ln(10)+\underset{+\infty}{O}(\frac{1}{10^n})\big)\big(\ln(\frac{d+1}{d})+\underset{+\infty}{O}(\frac{1}{10^n})\big)}{2}&\le
\sum_{i=d\times10^n}^{(d+1)\times10^n-1}\frac{\ln i}{i}\\
\frac{(\ln(d(d+1))+2n\ln(10))\ln(\frac{d+1}{d})+\underset{+\infty}{o}(1)}{2}&\le\sum_{i=d\times10^n}^{(d+1)\times10^n-1}\frac{\ln i}{i}\,.
\end{align*}
\endgroup
Thanks to Lemma \ref{lem3} we also have:
\begingroup\footnotesize
\begin{align*}
\sum_{i=d\times10^n}^{(d+1)\times10^n-1}\frac{\ln i}{i}&\le\frac{\ln\big(d(d+1)\times10^{2n}\big)\ln(\frac{(d+1)\times10^n}{d\times10^n})}{2}\\
\sum_{i=d\times10^n}^{(d+1)\times10^n-1}\frac{\ln i}{i}&\le\frac{(\ln(d(d+1))+2n\ln(10))\ln(\frac{d+1}{d})}{2}\,.
\end{align*}
\endgroup
Finally the first term of equality (\ref{equ1i}) verifies: 
\begin{align*}
\frac{1}{9d}\sum_{i=d\times10^n}^{(d+1)\times10^n-1}\frac{\widehat P_{(d,i)}}{10^n}&\underset{+\infty}{\sim}\frac{(9d(\alpha_d-1)+(9d-1)n\ln10+(9d-1)
\ln d)\ln(\frac{d+1}{d})}{81d}\\
&\enspace-\frac{(9d-1)(\ln(d(d+1))+2n\ln(10))\ln(\frac{d+1}{d})}{2\times81d}+\frac{1}{9d}\\
&\underset{+\infty}{\sim}\frac{(18d(\alpha_d-1)-(9d-1)\ln(\frac{d+1}{d}))\ln(\frac{d+1}{d})+18}{162d}\,.
\end{align*}

We consider henceforth the second term of the equality (\ref{equ1i}):
\begingroup\small
\begin{align*}
\frac{1}{9d}\sum_{i=(d+1)\times10^n}^{d\times10^{n+1}-1}\frac{\widehat P_{(d,i)}}{10^n}&=\frac{1}{9d}\sum_{i=(d+1)\times10^n}^{d\times10^{n+1}-1}\Big(
(d+1)\beta_d\frac{1}{i}-\frac{10}{9d}\big(\ln(d+1)+\ln(\frac{10^n}{i})\big)\Big)\\
&=\frac{9(d+1)\beta_d-10\ln(d+1)-10\ln(10)n}{81d}\sum_{i=(d+1)\times10^n}^{d\times10^{n+1}-1}\frac{1}{i}\\
&\enspace+\frac{10}{81d}\sum_{i=(d+1)\times10^n}^{d\times10^{n+1}-1}\frac{\ln i}{i}\,.
\end{align*}
\endgroup
The proofs of Lemma \ref{lem0'i} and Proposition \ref{sub2i} allow us to state:
\begin{align*}
\ln(\frac{d\times10^{n+1}}{(d+1)\times10^n})&\le\sum_{i=(d+1)\times10^n}^{d\times10^{n+1}-1}\frac{1}{i}\le\ln(\frac{d\times10^{n+1}-1}
{(d+1)\times10^n-1})\\
\ln(\frac{10d}{d+1})&\le\sum_{i=(d+1)\times10^n}^{d\times10^{n+1}-1}\frac{1}{i}\le\ln(\frac{10d}{d+1})+
\ln(1+\frac{\frac{1}{d+1}-\frac{1}{10i}}{10^n-\frac{1}{d+1}})\,,
\end{align*}
\textit{i.e.} $\sum_{i=(d+1)\times10^n}^{d\times10^{n+1}-1}\frac{1}{i}=\ln(\frac{10d}{d+1})+\underset{+\infty}{O}(\frac{1}{10^n})$. Then thanks to 
Lemma \ref{lem3}:
\begingroup\scriptsize
\begin{align*}
\frac{\ln\big((d\times10^{n+1}-1)((d+1)\times10^n-1)\big)\ln(\frac{d\times10^{n+1}-1}{(d+1)\times10^n-1})}{2}&\le\sum_{i=(d+1)\times10^n}^
{d\times10^{n+1}-1}\frac{\ln i}{i}\\
\frac{\ln(10d(d+1)\times10^{2n}-(11d+1)\times10^n+1)(\ln(\frac{10d}{d+1})+\ln(1+\frac{\frac{1}{d+1}-\frac{1}{10d}}{10^n-\frac{1}{d+1}}))}{2}&\le
\sum_{i=(d+1)\times10^n}^{d\times10^{n+1}-1}\frac{\ln i}{i}\,.
\end{align*}
\endgroup
We have:
\begingroup\scriptsize
\begin{align*}
\ln(10d(d+1)10^{2n}-(11d+1)10^n+1)&=\ln(10d(d+1)10^{2n})+\ln(1-\frac{11d+1-\frac{1}{10^n}}{10d(d+1)10^{n+1}})\\
&=\ln(10d(d+1))+2n\ln(10)+\underset{+\infty}{O}(\frac{11d+1}{10d(d+1)\times10^{n+1}})\,.
\end{align*}
\endgroup
We obtain:
\begingroup\scriptsize
\begin{align*}
\frac{\big(\ln(10d(d+1))+2n\ln(10)+\underset{+\infty}{O}(\frac{1}{10^n})\big)\big(\ln(\frac{10d}{d+1})+\underset{+\infty}{O}(\frac{1}{10^n})\big)}{2}&
\le\sum_{i=(d+1)\times10^n}^{d\times10^{n+1}-1}\frac{\ln i}{i}\\
\frac{(\ln(10d(d+1))+2n\ln10)\ln(\frac{10d}{d+1})+\underset{+\infty}{o}(1)}{2}&\le
\sum_{i=(d+1)\times10^n}^{d\times10^{n+1}-1}\frac{\ln i}{i}\,.
\end{align*}
\endgroup
Thanks to Lemma \ref{lem3} we also have:
\begingroup\scriptsize
\begin{align*}
\sum_{i=(d+1)\times10^n}^{d\times10^{n+1}-1}\frac{\ln i}{i}&\le\frac{\ln(d(d+1)\times10^{2n+1})\ln(\frac{10d}{d+1})}{2}\\
\sum_{i=(d+1)\times10^n}^{d\times10^{n+1}-1}\frac{\ln i}{i}&\le\frac{(\ln(10d(d+1))+2n\ln(10))\ln(\frac{10d}{d+1})}{2}\,.
\end{align*}
\endgroup
Finally the second term of equality (\ref{equ1i}) verifies:
\begingroup\small
\begin{align*}
\frac{1}{9d}\sum_{i=(d+1)\times10^n}^{d\times10^{n+1}-1}\frac{\widehat P_{(d,i)}}{10^n}&\underset{+\infty}{\sim}\frac{(9(d+1)\beta_d-10\ln(d+1)-
10n\ln(10))\ln(\frac{10d}{d+1})}{81d}\\
&\enspace+\frac{5(\ln(10d(d+1))+2n\ln(10))\ln(\frac{10d}{d+1})}{81d}\\
&\underset{+\infty}{\sim}\frac{(9(d+1)\beta_d+5\ln(\frac{10d}{d+1}))\ln(\frac{10d}{d+1})}{81d}\,.
\end{align*}
\endgroup

Hence:
\begingroup\scriptsize
\begin{align*}
\widehat C_{(1,n)}&\underset{+\infty}{\sim}\frac{(18d(\alpha_d-1)-(9d-1)\ln(\frac{d+1}{d}))\ln(\frac{d+1}{d})+18}{162d}+\frac{(9(d+1)\beta_d
+5\ln(\frac{10d}{d+1}))\ln(\frac{10d}{d+1})}{81d}\\
&\underset{+\infty}{\sim}\frac{(18d(\alpha_d-1)-(9d-1)\ln(\frac{d+1}{d}))\ln(\frac{d+1}{d})+18+2(9(d+1)\beta_d+5\ln(\frac{10d}{d+1}))
\ln(\frac{10d}{d+1})}{162d}\,.
\end{align*}
\endgroup
The result follows.
\end{proof}

Let us denote by $C_d$ the limit of $(C_{(d,n)})_{n\in\mathbb N}$.

Note that our first choice of "pseudo-cycle" give more weight at the values of proportion situated at the end of intervals 
$\llbracket d\times10^{i},d\times10^{i+1}-1\rrbracket$ where $i\in\mathbb N$. We can also have defined the sequence 
$(\tilde C_{(d,n)})_{n\in\mathbb N}$ as follows:
\begin{align*}
\tilde C_{(d,n)}=\frac{1}{9(d+1)\times10^n}\sum_{i=(d+1)\times10^n}^{(d+1)\times10^{n+1}-1}P_{(1,i)}\,.
\end{align*}

If so, we would state that the limit $\tilde C_d$ of the sequence $(\tilde C_{(d,n)})_{n\in\mathbb N}$ is:
\begingroup\footnotesize
\begin{align*}
\frac{(90d(\alpha_d-1)-5(9d-1)\ln(\frac{d+1}{d}))\ln(\frac{d+1}{d})+90+
(9(d+1)\beta_d+5\ln(\frac{10d}{d+1}))\ln(\frac{10d}{d+1})}{81(d+1)}\,.
\end{align*}
\endgroup

\begin{proof}$\tilde C_{(d,n)}\underset{+\infty}{\sim}\widehat C_{(d,n)}$ where, for all $n\in\mathbb N^*$:
\begin{align}\label{equit}
\widehat C_{(d,n)}&=\frac{1}{9(d+1)}\Big(\sum_{i=(d+1)\times10^n}^{d\times10^{n+1}-1}\frac{\widehat P_{(d,i)}}{10^n}+\sum_{i=d\times10^{n+1}}^
{(d+1)\times10^{n+1}-1}\frac{\widehat P_{(d,i)}}{10^n}\Big)\,.
\end{align}
The first term of this sum is similar to the second term of the associated sum in the proof of Proposition \ref{proci}. The second one verifies:
\begingroup\scriptsize
\begin{align*}
\frac{1}{9(d+1)}\sum_{i=d\times10^{n+1}}^{(d+1)\times10^{n+1}-1}\frac{\widehat P_{(d,i)}}{10^n}&=
\frac{90d(\alpha_d-1)+10(9d-1)((n+1)\ln(10)+\ln d)}{81(d+1)}\sum_{i=d\times10^{n+1}}^{(d+1)\times10^{n+1}-1}\frac{1}{i}\\
&\enspace-\frac{10(9d-1)}{81(d+1)}\sum_{i=d\times10^{n+1}}^{(d+1)\times10^{n+1}-1}\frac{\ln i}{i}+\frac{10}{9(d+1)}\,.
\end{align*}
\endgroup

Finally the second term of equality (\ref{equit}) is: 
\begingroup\scriptsize
\begin{align*}
\frac{1}{9(d+1)}\sum_{i=d\times10^{n+1}}^{(d+1)\times10^{n+1}-1}\frac{\widehat P_{(d,i)}}{10^n}&\underset{+\infty}{\sim}
\frac{(90d(\alpha_d-1)+10(9d-1)((n+1)\ln10+\ln d))\ln(\frac{d+1}{d})}{81(d+1)}\\
&\enspace-\frac{5(9d-1)(\ln(d(d+1))+2(n+1)\ln(10))\ln(\frac{d+1}{d})}{81(d+1)}+\frac{10}{9(d+1)}\\
&\underset{+\infty}{\sim}\frac{(90d(\alpha_d-1)-5(9d-1)\ln(\frac{d+1}{d}))\ln(\frac{d+1}{d})+90}{81(d+1)}\,.
\end{align*}
\endgroup

Hence:
\begingroup\scriptsize
\begin{align*}
\widehat C_{(d,n)}&\underset{+\infty}{\sim}\frac{(9(d+1)\beta_d+5\ln(\frac{10d}{d+1}))\ln(\frac{10d}{d+1})}{81(d+1)}+
\frac{(90d(\alpha_d-1)-5(9d-1)\ln(\frac{d+1}{d}))\ln(\frac{d+1}{d})+90}{81(d+1)}\\
&\underset{+\infty}{\sim}\frac{(90d(\alpha_d-1)-5(9d-1)\ln(\frac{d+1}{d}))\ln(\frac{d+1}{d})+90+(9(d+1)\beta_d+
5\ln(\frac{10d}{d+1}))\ln(\frac{10d}{d+1})}{81(d+1)}\,.
\end{align*}
\endgroup
The result follows.
\end{proof}

Once more, means values over both "pseudo-cycles" are very close to the theoric value highlighted by Benford: $\log(1+\frac{1}{d})$ (\cite{ben}). Table 
\ref{tab9} below gathers the whole values:

\begin{table}[ht]
\centering
\begin{tabular}{|c||c|c|c|}
\hline
\rowcolor{gray!40}$d$&$\tilde C_d$&$\log(1+\frac{1}{d})$&$C_d$\\\hline
$1$& $0.281$ & $0.301$ & $0.301$  \\\hline
$2$& $0.160$ & $0.176$ & $0.191$  \\\hline
$3$& $0.113$ & $0.125$ & $0.139$  \\\hline
$4$& $0.088$ & $0.097$ & $0.109$  \\\hline
$5$& $0.072$ & $0.079$ & $0.090$  \\\hline
$6$& $0.061$ & $0.067$ & $0.077$ \\\hline
$7$& $0.053$ & $0.058$ & $0.067$  \\\hline
$8$& $0.047$ & $0.051$ & $0.059$  \\\hline
$9$& $0.042$ & $0.046$ & $0.053$  \\\hline
\end{tabular}
\caption{Values of $C_d$, $\tilde C_d$ and Benford's Law probabilities, these values being rounded to the nearest thousandth.}
\label{tab9}
\end{table}

Indeed, according to Hill (\cite{hip}), it is absolutely normal. In a way, it can be considered as an equivalent to the central limit theorem 
(\cite{del}).

\section*{Conclusion}

To conclude, through our model, we have seen that the proportion of $d$ as leading digit, $d\in\llbracket1,9\rrbracket$, in certain naturally occurring 
collections of data is more likely to follow a law whose probability distribution is $(d,P_{(d,n)})_{d\in\llbracket1,9\rrbracket}$, where $n$ is 
the smaller integer upper bound of the physical, biological or economical quantities considered, rather than Benford's Law. These probability 
distributions fluctuate around Benford's value as can be seen in the literature (see \cite{knu}, \cite{BK}, \cite{NW} or \cite{FGP} for example) in 
accordance with our model. Knowing beforehand the value of the upper bound $n$ can be a way to find a better adjusted law than Benford's one. 

The results of the article would have been the same in terms of fluctuations of the proportion of $d\in\llbracket1,9\rrbracket$ as leading digit, of 
limits of subsequences, or of results on central values, if our discrete uniform distributions uniformly randomly selected were lower bounded by a 
positive integer different from $0$: first terms in proportion formulas become rapidly negligible. Through our model we understand that the predominance 
of $1$ as first digit (followed by those of $2$ and so on) is all but surprising in experimental data: it is only due to the fact that, in the 
lexicographical order, $1$ appears before $2$, $2$ appears before $3$, \textit{etc.}

However the limits of our model rest on the assumption that the random variables used to obtain our data are not the same and follow discrete uniform 
distributions that are uniformly randomly selected. In certain naturally occurring collections of data it cannot conceivably be justified. Studying the 
cases where the random variables follow other distributions (and not necessarily randomly selected) sketch some avenues for future research on the 
subject.

\bibliography{bib}

\section*{Appendix: Python script}

Using Propositions \ref{proi}, we can determine the terms of $(P_{(d,n)})_{n\in\mathbb N^*}$, for $d\in\llbracket1,9\rrbracket$. To this 
end, we have created a script with the Python programming language (Python Software Foundation, Python Language Reference, version $3.4.$ available at 
\url{http://www.python.org}, see \cite{pyt}). The implemented function \textit{expvalProp} has two parameters: the 
rank $n$ of the wanted term of the sequence and the value $ld$ of the considered leading digit. Here is the used algorithm:
\vspace{0.5\baselineskip}

{\setlength{\parindent}{1.5em}\textit{def expvalProp(n,ld):}}

{\setlength{\parindent}{3em}\textit{if(ld$>$n):}}

{\setlength{\parindent}{4.5em}\textit{return(0)}}

{\setlength{\parindent}{3em}\textit{else:}}

{\setlength{\parindent}{4.5em}\textit{k=0}}

{\setlength{\parindent}{4.5em}\textit{while(ld*10**k$<=$n):}}

{\setlength{\parindent}{6em}\textit{k=k+1}}

{\setlength{\parindent}{4.5em}\textit{u=1;v=ld-1;S=0;T=0;}}

{\setlength{\parindent}{4.5em}\textit{for i in range(0,k-1):}}

{\setlength{\parindent}{6em}\textit{for b in range(ld*10**i,(ld+1)*10**i):}}

{\setlength{\parindent}{7.5em}\textit{T=T+(b-v)/b}}

{\setlength{\parindent}{6em}\textit{for a in range((ld+1)*10**i,ld*10**(i+1)):}}

{\setlength{\parindent}{7.5em}\textit{S=S+u/a}}

{\setlength{\parindent}{6em}\textit{u=u+10**(i+1)}}

{\setlength{\parindent}{6em}\textit{v=v*10+8}}

{\setlength{\parindent}{4.5em}\textit{if (n$<$(ld+1)*10**(k-1)):}}

{\setlength{\parindent}{6em}\textit{for b in range(ld*10**(k-1),n+1):}}

{\setlength{\parindent}{7.5em}\textit{T=T+(b-v)/b}}

{\setlength{\parindent}{4.5em}\textit{else:}}

{\setlength{\parindent}{6em}\textit{for b in range(ld*10**(k-1),(ld+1)*10**(k-1)):}}

{\setlength{\parindent}{7.5em}\textit{T=T+(b-v)/b}}

{\setlength{\parindent}{6em}\textit{for a in range((ld+1)*10**(k-1),n+1):}}

{\setlength{\parindent}{7.5em}\textit{S=S+u/a}}

{\setlength{\parindent}{4.5em}\textit{return((S+T)/n)}}

\end{document}